\newtheorem{theorem}{Theorem}
\newtheorem{lemma}{Lemma}
\renewcommand{\Re}{\mathop{\rm Re}\nolimits}
\renewcommand{\Im}{\mathop{\rm Im}\nolimits}
\newcommand{\CIc}{C^\infty_{\rm{c}}}
\newcommand{\RR}{\mathbb R}
\newcommand{\Spec}{\operatorname{Spec}}
\newcommand{\bl}{\begin{flushleft}}
\newcommand{\el}{\end{flushleft}}
\newcommand{\br}{\begin{flushright}}
\newcommand{\ert}{\end{flushright}}
\newcommand{\bc}{\begin{center}}
\newcommand{\ec}{\end{center}}
\newcommand{\isect}{\bigcap}
\newcommand{\recip}[1]{\frac{1}{#1}}
\newcommand{\numList}{\begin{enumerate}}
\newcommand{\enumList}{\end{enumerate}}
\newcommand{\sumLim}[3]{\sum_{#1=#2}^{#3}}
\newcommand{\e}{\epsilon}
\newcommand{\re}{\mathbb{R}}
\newcommand{\nn}{\nonumber\\}
\newcommand{\la}{\langle}
\newcommand{\ra}{\rangle}
\newcommand{\negint}{{\int\negthickspace\negthickspace\negthickspace\negthinspace -}}
\numberwithin{equation}{section}
\title [Nonlinear instability in a semiclassical problem]{Nonlinear instability in a semiclassical problem}
\author[J. Galkowski]{Jeffrey Galkowski}
\address{Mathematics Department, University of California, Berkeley, 
CA 94720, USA}
\email{jeffrey.galkowski@math.berkeley.edu}
\begin{document}

\begin{abstract}
We consider a nonlinear evolution problem with an asymptotic
parameter and construct examples in which the linearized
operator has spectrum uniformly bounded away from 
$ \Re z \geq 0 $ (that is, the problem is spectrally stable), yet the nonlinear
evolution blows up in short times for arbitrarily small
initial data.

We interpret the results in terms of semiclassical pseudospectrum
of the linearized operator: despite having the spectrum in 
$ \Re z < -\gamma_0 < 0 $, the resolvent of the linearized operator grows very quickly in parts of 
the region $ \Re z > 0 $. We also illustrate the results numerically.
\end{abstract}

\maketitle

\section{Introduction}
For a large class of nonlinear evolutions the size of the
resolvent has been proposed as an explanation of instability 
for spectrally stable problems. Celebrated examples include the 
plane Couette flow, plane Poiseuille flow and plane flow -- 
see Trefethen-Embree \cite[Chapter 20]{ET} for discussion and
references. Motivated by this we consider the mathematical question 
of evolution involving a small parameter $ h $ (in fluid
dynamics problem we can think of $ h $ as the reciprocal of
the Reynolds number) in which the linearized operator has 
the spectrum lying in $ \Re z <  - \gamma_0 < 0 $, uniformly 
in  $h $,  yet 
the the solutions of the nonlinear equation blow up at time 
$ O ( 1 ) $ for data of size $ O ( \exp ( - c/h ) $.  

We know of one rigorous example of such a phenomenon
given by Sandstede-Scheel \cite{Sand03}. They considered
$ u_t = u_{xx} + u_{x} + u^3$ on $ [ 0, \ell ] $ with Dirichlet 
boundary conditions, and showed that 
blow up occurs with arbitrarily
small initial data as $ \ell \rightarrow \infty $.
In that problem $ h = 1/\ell $.  
The paper \cite{Sand03} is our starting point and we use its maximum
principle approach to obtain results
for suitable operators in any dimension. In addition, we
emphasize the connection with the semiclassical 
pseudospectrum and provide some numerical comparisons.

We consider a semiclassical nonlinear evolution equation 
\begin{equation}
\label{eqn:main}
hu_t=P(x, hD)u+u^3\,, \ \ \ x \in \RR^d \,, \ \ t \geq 0 \,. 
\end{equation}
where $ P ( x, h D ) $ is the following semiclassical differential operator 
\begin{equation}
\label{eqn:L}
P(x, hD):= - \left( (hD)^2 + V(x)\right) +ih\la \nabla \rho ,D\ra +\mu \,, \ \ \  D_{j}
:= \frac 1 i \partial_{j}\quad D^2=-\Delta.
\end{equation}
Here, $V\in C^\infty$ is a potential function with
\begin{equation}
\label{eqn:PotentialProp}
\begin{cases} V(x)\geq C\la x\ra^k\text{ on } |x|\geq M,&
|\partial^\alpha V(x)|\leq C_\alpha \la x\ra^k,\\
V(x_0)= 0\text{ for some }x_0\in\re^d,&V(x)\geq 0,\end{cases}
\end{equation}
 for some $C,C_\alpha,k,M>0$. Also, $\rho\in C^\infty$ has the properties
\begin{equation}
\label{eqn:vfieldProp}
|\partial^2_{ij}\rho|\leq C_{ij}\la x\ra ^k,\quad
|\nabla\rho|^2\geq 4(\mu+\gamma_0),
\end{equation} 
for some $C_{ij},\gamma_0,N>0$.
Finally, $\mu>0$. 

\medskip

\noindent
{\bf{Remark:}} All of our results hold for weaker assumptions on the growth of $V$ and $\rho$, however \eqref{eqn:PotentialProp} and 
\eqref{eqn:vfieldProp} are convenient for our purposes.

We will show in section \ref{sec:spectra} that for $V(x)$ and $\rho$
as in (\ref{eqn:PotentialProp}) and (\ref{eqn:vfieldProp})
respectively, the linearized problem is spectrally stable, that is,
the spectrum is bounded away from $\Re z\geq 0$ uniformly in $ h $.
Yet, we also show that \eqref{eqn:main} 
has an unstable equilibrium at $u\equiv 0$ for all potentials $V(x)$ satisfying (\ref{eqn:PotentialProp}) and all  $\rho$ satisfying (\ref{eqn:vfieldProp}). Specifically, we show 
\begin{theorem}\label{thm:main}
Fix $\mu>0$. Then, for each 
\[ 0 < h < h_0 \,, \]
where $ h_0 $ is small enough, there exists
\[ u_0 \in \CIc ( \RR^n ) \,, \ \ u_0 \geq 0 \,, \ \ \| u_0 \|_{ C^p} \leq
\exp\left(-\recip{Ch}\right) \,, \  \ p = 0 , 1 , \dots \,, \]
such that the solution to \eqref{eqn:main} with $ u ( x, 0 ) =  u_0 (
x ) $, satisfies 
\[   \| u ( x, t ) \|_{L^\infty } \longrightarrow \infty , \ \ t
\longrightarrow T \,, \]
where
\[   T = O (1) \, . \]
%and smooth, positive initial data $u_0(x)$ with $\|u_0\|_{L^\infty}\leq Ch^k$ so that $u(x,t)$ blows up by time $t_2$ where $u(x,t)$ is the solution to (\ref{eqn:main}) with initial data $u_0$. 
\end{theorem}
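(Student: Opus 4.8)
The plan is to rewrite \eqref{eqn:main} as a genuine real parabolic equation, apply the maximum principle, and produce blow‑up in two stages: a transient linear amplification stage that pushes the exponentially small data up to a large amplitude in time $O(1)$, followed by a Kaplan eigenfunction argument that converts this large amplitude into finite‑time blow‑up of the cubic. First I would observe that $ih\la\nabla\rho,D\ra=h\la\nabla\rho,\nabla\ra$ is a \emph{real} first–order operator, so \eqref{eqn:main} reads $hu_t=h^2\Delta u+h\la\nabla\rho,\nabla\ra u+(\mu-V)u+u^3$, a uniformly (for fixed $h$) parabolic equation with smooth coefficients; local existence, the comparison principle, and positivity ($u_0\ge 0\Rightarrow u\ge 0$, by comparison with the zero solution) all hold. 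Since $V(x_0)=0$ and $V$ is continuous, the zeroth order coefficient $(\mu-V)/h$ is $\ge \mu/(2h)>0$ on a fixed neighbourhood of $x_0$: this is the source of transient growth — the drift $\la\nabla\rho,\nabla\ra$ eventually carries mass into $\{V>\mu\}$, where the coefficient is negative (which is why the problem is spectrally stable), but for a time $O(1)$ it cannot.

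For the first stage I would fix a small $r_0>0$ and a small $t_1>0$ and use a \emph{moving bump} subsolution. Let $F_t$ be the time‑$t$ flow of the vector field $\nabla\rho$, and let $\Phi_0(z)=\exp(-1/(r_0^2-|z-x_0|^2))$ on $|z-x_0|<r_0$ and $0$ elsewhere, a $\CIc$ bump with the key property that $\Delta\Phi_0/\Phi_0\to+\infty$ as $|z-x_0|\to r_0$. Set $u_0:=\varepsilon\Phi_0$ with $\varepsilon=\exp(-1/(Ch))$ and $\underline u(x,t):=\varepsilon e^{\sigma t/h}\,\Phi_0(F_t(x))$ with $\sigma=\mu/4$. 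Because $F_t$ flows $\nabla\rho$, one has $\partial_t[\Phi_0(F_t(x))]=\la\nabla\rho,\nabla_x[\Phi_0(F_t(x))]\ra$, so the transport term cancels \emph{exactly}, and the subsolution inequality reduces to the pointwise bound $h^2\,\Delta G/G+(\mu-V-\sigma)+\underline u^2\ge 0$, where $G=\Phi_0\circ F_t$. For $r_0,t_1$ small this holds: on the tube $\bigcup_{t\le t_1}\mathrm{supp}\,G(\cdot,t)$ one has $\mu-V-\sigma\ge\mu/4$, while $h^2\Delta G/G$ is bounded below by a fixed constant times $-h^2$ (and in fact tends to $+\infty$ near the lateral boundary of the tube, since for $t_1$ small the flow distorts $\Phi_0$ only mildly and $\Delta G/G$ inherits the $+\infty$ blow‑up of $\Delta\Phi_0/\Phi_0$). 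Comparison then gives $u(x,t)\ge\varepsilon e^{\sigma t/h}\,\Phi_0(F_t(x))$ for $0\le t\le t_1$; evaluating at $t=t_1$ on a small fixed ball $B(x_*,R_0)$ around $x_*:=F_{t_1}^{-1}(x_0)$ (chosen so that $F_{t_1}(B(x_*,R_0))\subset B(x_0,r_0/2)$ and $V<\mu$ on $B(x_*,R_0)$) yields $u(\cdot,t_1)\ge M$ on $B(x_*,R_0)$ with $M=c\,e^{(\sigma t_1-1/C)/h}$, which is large once $C>4/(\mu t_1)$.

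For the second stage I would compare $u$ on $B(x_*,R_0)$ with the solution $\underline w$ of the Dirichlet problem $h\underline w_t=h^2\Delta\underline w+h\la\nabla\rho,\nabla\ra\underline w+\underline w^3$ on $B(x_*,R_0)$ with data $M\zeta$, where $\zeta$ is a cutoff equal to $1$ on $B(x_*,R_0/2)$. Since $\mu-V>0$ on $B(x_*,R_0)$ and $\underline w\ge 0$, this $\underline w$ is a subsolution of \eqref{eqn:main} restricted to the ball, so $u\ge\underline w$ while both exist. To see that $\underline w$ blows up in time $O(h/M^2)$ I would use Kaplan's method: let $\psi>0$ be the principal Dirichlet eigenfunction (eigenvalue $\Lambda$) of the adjoint operator $h^2\Delta-h\la\nabla\rho,\nabla\ra-h\Delta\rho$ on $B(x_*,R_0)$, normalized so $\int\psi=1$; then $y(t):=\int\underline w(\cdot,t)\psi$ satisfies, by integration by parts and Jensen's inequality, $h\dot y\ge\Lambda y+y^3$, with $y(t_1)\gtrsim M$. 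As $M$ is large this forces $y(t)\to\infty$ at some time $T\le t_1+O(h/M^2)=O(1)$, and since $y(t)\le\|\underline w(\cdot,t)\|_{L^\infty}$ we conclude that $\|u(\cdot,t)\|_{L^\infty}\to\infty$ as $t$ approaches the (maximal) existence time $T^{**}\le T=O(1)$. This gives the theorem, with $u_0=\varepsilon\Phi_0\in\CIc$, $u_0\ge 0$, and $\|u_0\|_{C^p}=\varepsilon\|\Phi_0\|_{C^p}\lesssim_p\exp(-1/(Ch))$.

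The hardest part will be the construction of the stage‑one subsolution: a naive product subsolution $A(t)\Phi(x)$ fails because near the edge of $\mathrm{supp}\,\Phi$ the drift term $h\la\nabla\rho,\nabla\Phi\ra$ and the Laplacian term swamp the tiny value of $\Phi$, forcing $A$ to decay. The two devices that rescue it are (i) letting the bump ride the characteristics of $\nabla\rho$, which cancels the drift term identically regardless of the shape of $\Phi$, and (ii) choosing the flat cutoff $\exp(-1/(r_0^2-|\cdot|^2))$, whose $\Delta\Phi/\Phi$ is $+\infty$ at the boundary, so that what remains of the Laplacian term actually helps rather than hurts. The remaining points — the comparison principle on $\RR^d$ (handled by exhausting with large balls, using $u\ge 0$ on their boundaries together with the confinement from $V\to\infty$), and the standard local theory and blow‑up characterization for the semilinear parabolic problem — are routine.
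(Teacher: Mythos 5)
Your proof is correct and follows the same two‑stage architecture as the paper's: a subsolution that rides the flow of $\nabla\rho$ to amplify exponentially small data up to size $O(1)$ in time $O(1)$ (exploiting that $\mu-V>0$ near $x_0$, which is exactly the source of pseudospectrum in $\{\Re z>0\}$), followed by a Kaplan‑type weighted‑average argument that converts this into finite‑time blow‑up of the cubic. The key formula $\partial_t[\Phi_0(F_t(x))]=\langle\nabla\rho,\nabla_x[\Phi_0(F_t(x))]\rangle$, which rests on the push‑forward identity $DF_t(x)\nabla\rho(x)=\nabla\rho(F_t(x))$, is precisely the paper's cancellation as well.

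Where you differ is in the details, and your choices make the argument cleaner in two places. In stage one, the paper takes $w_0$ to be a modification of the Dirichlet ground state on a ball; because this vanishes only linearly at $\partial O$, the composite $w=e^{\alpha t/h}w_0(\varphi_t(x))$ has a gradient jump along $\partial O_t$, and the paper must verify the subsolution property in the viscosity sense, going case by case at the corner. Your flat bump $\Phi_0=\exp(-1/(r_0^2-|\cdot-x_0|^2))$ is globally $\CIc$, so $\underline u$ is a classical subsolution and the viscosity machinery is avoided entirely; moreover $\Delta\Phi_0/\Phi_0\to+\infty$ at $|z-x_0|\to r_0$ gives the Laplacian term a favorable sign exactly where it would otherwise be dangerous, while $\Delta\Phi_0/\Phi_0\geq -C(d,r_0)$ in the interior gives the $-Ch^2$ lower bound you need. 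The one step you should make quantitative is the claim that $\Delta G/G$ ``inherits'' the $+\infty$ blow‑up: writing $\Delta G=\mathrm{tr}(DF_tDF_t^{\mathsf T}D^2\Phi_0(F_t))+\sum_j(\Delta(F_t)_j)\partial_j\Phi_0(F_t)$, the correction terms are of size $O(t)\cdot(r_0^2-|F_t(x)|^2)^{-4}$, which is the \emph{same} order of singularity as the leading term $4|F_t(x)|^2(r_0^2-|F_t(x)|^2)^{-4}$, so the argument needs $t_1\ll r_0^2$ rather than $t_1$ merely small; this is easily arranged (and is the analogue of the paper's ``$\varphi_t(x)=x+O(t)$'' step), but should be stated. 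In stage two, the paper pulls $u$ back through the flow and averages against a cutoff over a moving ball, whereas you use the classical Kaplan device with the principal Dirichlet eigenfunction of the adjoint on a fixed ball $B(x_*,R_0)$. Both are sound: your eigenvalue $\Lambda$ is negative and $O(h)$, but $y(t_1)\geq M$ is exponentially large, so $y^3$ dominates immediately and the ODE $h\dot y\geq\Lambda y+y^3$ blows up in time $O(h/M^2)$; the paper instead gets $[v](t_1)\geq 1/4$ and blow‑up in time $O(h)$. Either way $T=O(1)$. Your approach buys a self‑contained classical argument at the cost of having to invoke Krein–Rutman for the principal eigenpair of the non‑self‑adjoint adjoint operator; the paper's cutoff average avoids Krein–Rutman but requires the moving‑frame bookkeeping and the extra integration‑by‑parts identities for $[\Delta\chi,u]$ and $[\nabla\chi,\nabla u]$.
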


A nice example for which our assumptions hold is (\ref{eqn:L}) with $x\in \re$, $V(x)=x^2$, and $\la\nabla \rho,D\ra=D_x$. That is
\begin{equation}
\label{eqn:L1d}
P_1(x,hD):=-\left((hD_x)^2+x^2\right)+ihD_x +\mu\,,  \quad x\in \re
\end{equation}
It is easy to see (and will be described in Section \ref{sec:spectra})
that 
\[ \begin{split}  \Spec ( P_1 ( x , h D) ) & = \{ \mu -1/4  - h (2 n + 1) \; : \; n = 0,
1, 2 \cdots \} \\ & \subset \{ z \; : \; \Re z \leq \mu  -1/4 \} \,. 
\end{split}  \]
For $\mu>\recip{4}$ the spectrum intersects the right half plane and thus instability of the linear problem follows. We are interested in the range
$0<\mu<\recip{4}$, where we will 
relate the instability of $u=0$ to the presence of pseudospectrum in the right half plane. 

For more about \eqref{eqn:L1d} see \cite[Chapter 12]{ET}. 
In particular, Cossu-Chomaz \cite{CC} relate it to the linearized
Ginzburg-Landau equation and analyze resolvent of \eqref{eqn:L1d} and
the norm of the semigroup $e^{P_1(x,hD)t}$
numerically.

The operator \eqref{eqn:L1d} is also closely related to the
advection-diffusion operators mentioned above, $ -D_y^2 + i D_y = \partial_y^2
+ \partial_y $, on $ [ 0, \ell ] $, with, say, Dirichlet boundary 
conditions; see \cite[Chapter 12]{ET} for a discussion and
references. When rescaled using $ x = y /\ell $, $ h = 1/l $
the operator becomes the semiclassical operator $ - ( hD)^2 + i h
D_x $ on $ [ 0 , 1 ] $. When the domain is extended to $\re$, the potential $ x^2 $ is added to 
$ (h D_x)^2 $ to produce a confinement similar to a boundary. 

We relate the blow-up of solutions to \eqref{eqn:main} to the presence of pseudospectrum of \eqref{eqn:L} in the right half plane. However, because estimates on semigroups for (\ref{eqn:main}) with quasimode initial data are poor, we are unable to exhibit blow-up starting from a quasimode. Instead, we present a simple and explicit construction of quasimodes for $P(x,hD)$ (for a more general setting see \cite{zworski04}). We then use these quasimodes as initial data in numerical simulations and observe that, although in some cases the ansatz solution blows up more quickly, the solutions with quasimode initial data behave similarly to what is expected from a pure eigenvalue for \eqref{eqn:L} with positive real part.

\begin{figure}[htbp]
\includegraphics[width=6.5in]{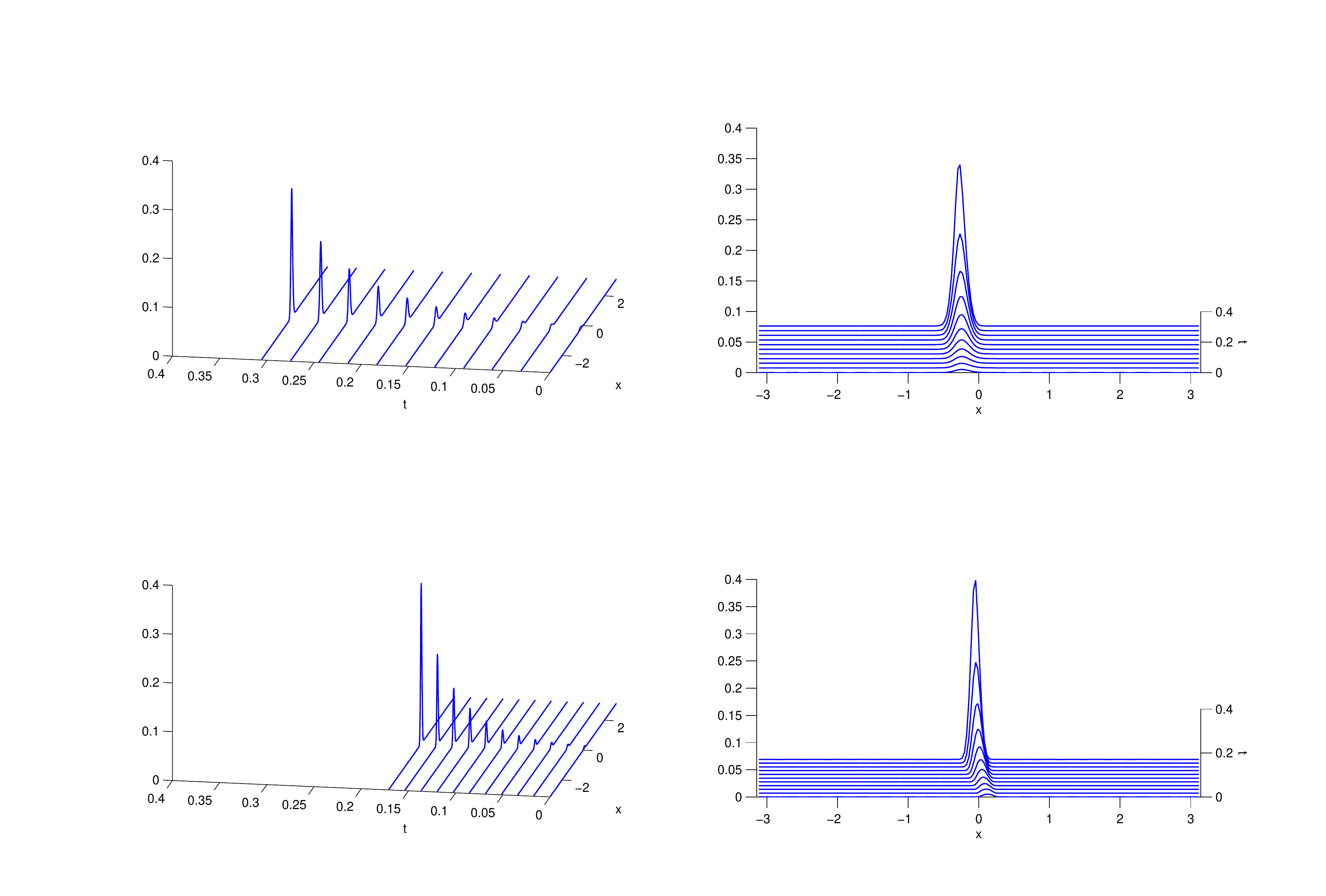}
\begin{center}
\caption{\label{f:2}
The plot shows numerical simulations of the evolution of
(\ref{eqn:main}) with $h=1/193$ and two initial data.  The evolution with initial data a real valued $O(h^3)$ error quasimode with eigenvalue $z=\recip{16}$ is shown in the top two graphs and that with the ansatz constructed in the proof of Lemma \ref{lem:O(1)ndim} as initial data is shown in the bottom two graphs. We observe that, when the initial data is a quasimode, blowup occurs in time $\approx 0.3$, while for ansatz initial data, blow-up occurs in time $\approx 0.175$. However, as would be expected from eigenfunction initial data, we see that the solution with quasimode initial data exhibits little transport to the left. On the other hand, the ansatz transports left significantly.}
\end{center}
\end{figure}

The paper is organized as follows. In Section \ref{sec:spectra} we
review the definitions of spectra and pseudospectra and discuss them
for our class of operators. In Section \ref{sec:odq} we give a
construction of quasimodes for
one dimensional problems. Although the results are known, (see
\cite{Davi98},\cite{zworski04},\cite{zworski01}) a self-contained presentation 
is useful since we need the quasimodes for our numerical experiments.
Also, there is no reference in which analytic potentials (for which
quasimodes have $ O ( \exp ( - c/h) ) $ accuracy) is treated
by elementary methods in one dimension. Section \ref{sec:i} 
is devoted to the proof of Theorem \ref{thm:main} using heat
equation methods. Finally, in Section \ref{sec:numerical} we 
report on some numerical experiments which suggest that
quasimode initial data gives more natural blow-up and that blow-up occurs at complex energies.

%\medskip
\noindent
{\sc Acknowledgemnts.} The author would like to thank Maciej Zworski for suggesting the problem and for valuable discussion, guidance, and advice. Thanks also to Laurent Demanet and Trever Potter for allowimg him to use their MATLAB codes, Justin Holmer for informing him of the paper by Sandstede and Scheel, and Hung Tran for comments on the maximum principle in Lemma \ref{lem:O(1)ndim}. The author is grateful to the National Science Foundation for partial support under grant DMS-0654436 and under the National Science Foundation Graduate Research Fellowship Grant No. DGE 1106400.

\section{Spectrum and Pseudospectrum}
\label{sec:spectra}
We do not use the results of this section to prove Theorem
\ref{thm:main}. Instead, we present them to emphasize the connection of the 
size of the resolvent with instability. We believe that instability
based on quasimodes would be more natural and allow for proof of instability at complex energies. We illustrate this with numerics in Section \ref{sec:numerical}.

To describe the spectrum of $P(x, hD)$, we observe that 
$$e^{\frac{\rho(x)}{2h}}P(x, hD)e^{-\frac{\rho(x)}{2h}}=-\left[(hD)^2+V(x)+\recip{4}
|\nabla \rho|^2+\frac{h}{2}\Delta \rho\right]+\mu.$$
Thus, the spectrum of $P(x, hD)$ is given by that of a Schr\"{o}dinger
operator with potential $V(x)+\recip{4}|\nabla
\rho|^2+\frac{h}{2}\Delta\rho$. Since $V(x)$ and $\rho$ have the properties given in (\ref{eqn:PotentialProp}) and (\ref{eqn:vfieldProp}) respectively, $P(x,hD)$ has a discrete spectrum that has real part bounded above by $-\gamma_0$ (see for instance \cite[Section 6.3]{EZB}).

We now examine the pseudospectral properties of (\ref{eqn:L}).

%\medskip
\noindent
{\bf Definition.} 
Let $Q(x,hD)$ be a second order semiclassical differential operator. Then, $z\in \Lambda(Q)$ if and only if $\exists$ $u(h)\in H^2(\re^d)$ such that $\|u\|_{L^2}=1$ and 
$$\left\|\left(Q(x,hD)-z\right)u(h)\right\|_{L^2}=O(h^\infty).$$
We say $z$ is in the semiclassical \emph{pseudospectrum} of $Q$ if $z\in \overline{\Lambda(Q)}$. 

%\medskip
\noindent
{\bf Remark.} We note that for $ z \in \Lambda ( Q ) $,  $\| Q ( x , h D
) - z ) ^{-1} \| \geq h^{-N} / C_N $, for any $ N$. This relates our
definition to the more standard defintions of pseudospectra in terms of
the resolvent. For discussion and generalizations see Dencker
\cite{De} and Pravda-Starov \cite{KPS}.

%\medskip

The criterion for $ z \in \Lambda ( Q ) $ is  based on H\"ormander's
bracket condition (see Zworski \cite{zworski01} and
Dencker-Sj\"ostrand-Zworski \cite{zworski04}):
\begin{equation}
\label{eqn:inPseudo} Q(x_0,\xi_0)=z\text{ and }\{\Re Q,\Im Q\}(x_0,\xi_0)<0,
\end{equation}
 then  $z\in \Lambda (Q)$. We use this condition to show that the pseudospectrum of $P(x,hD)$ nontrivially interesects the right half plane. Specifically,  
\begin{lemma}
For $P(x,hD)$ given by (\ref{eqn:L}), $\overline{\Lambda(P(x,hD))}\isect \{\Im z =0\}=(-\infty ,\mu]$. 
\end{lemma}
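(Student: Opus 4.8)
The plan is to compute the principal symbol of $P(x,hD)$ explicitly and apply the bracket criterion \eqref{eqn:inPseudo} on the real axis. From \eqref{eqn:L} the full symbol is $P(x,\xi) = -(\xi^2 + V(x)) - h\la\nabla\rho(x),\xi\ra + \mu$ after writing $ih\la\nabla\rho,D\ra$ in terms of $\xi$; but for the bracket condition we work with the principal ($h$-independent) symbol $p(x,\xi) = \mu - \xi^2 - V(x) + i\,(\text{lower order})$. Actually the cleanest route is to keep $P(x,\xi) = -|\xi|^2 - V(x) + i\la\nabla\rho(x),\xi\ra + \mu$ as the semiclassical principal symbol (treating $ih\la\nabla\rho,D\ra$ as order zero in the appropriate sense as in \cite{zworski04}), so that $\Re P = \mu - |\xi|^2 - V(x)$ and $\Im P = \la\nabla\rho(x),\xi\ra$. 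For a real target value $z = E \in \RR$, the equation $P(x_0,\xi_0) = E$ forces $\Im P(x_0,\xi_0) = \la\nabla\rho(x_0),\xi_0\ra = 0$ and $\Re P(x_0,\xi_0) = \mu - |\xi_0|^2 - V(x_0) = E$.

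The two inclusions then go as follows. For the inclusion $(-\infty,\mu]\subseteq\overline{\Lambda(P)}$: given $E \le \mu$, I would first handle $E < \mu$ by exhibiting a point $(x_0,\xi_0)$ with $\la\nabla\rho(x_0),\xi_0\ra = 0$, $|\xi_0|^2 + V(x_0) = \mu - E > 0$, and $\{\Re P, \Im P\}(x_0,\xi_0) < 0$. Compute $\{\Re P,\Im P\} = \partial_\xi(\Re P)\cdot\partial_x(\Im P) - \partial_x(\Re P)\cdot\partial_\xi(\Im P)$; with $\Re P = \mu - |\xi|^2 - V$ and $\Im P = \la\nabla\rho,\xi\ra$ this is $(-2\xi)\cdot(\text{Hess}\,\rho\,\xi) - (-\nabla V)\cdot\nabla\rho = -2\la\xi, (\nabla^2\rho)\xi\ra + \la\nabla V,\nabla\rho\ra$. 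The point is that the second term involves $\nabla V$ which vanishes at the minimum $x_0$ of $V$ where $V(x_0)=0$, so near $x_0$ I can choose $\xi_0$ with $|\xi_0|^2 = \mu - E$ (possible since $\mu - E > 0$), $\la\nabla\rho(x_0),\xi_0\ra = 0$, and then either $\la\xi_0,(\nabla^2\rho)\xi_0\ra > 0$ giving the bracket negative directly, or if that quadratic form is unfavorable, replace $\xi_0$ by $-\xi_0$; one of the two choices, combined with the fact that I have a whole sphere's worth of directions orthogonal to $\nabla\rho(x_0)$ to choose from (when $d\ge 2$) or a sign choice (when $d=1$), will make the bracket negative — and the $d=1$ case is exactly \eqref{eqn:L1d} where $\rho'$ is constant so $\nabla^2\rho = 0$ and one needs to instead perturb $x_0$ slightly off the minimum to pick up $V'(x)\rho'(x) < 0$ for one sign. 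Then by \eqref{eqn:inPseudo}, $E \in \Lambda(P)$, and $E = \mu$ is obtained by taking closure.

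For the reverse inclusion $\overline{\Lambda(P)}\cap\{\Im z = 0\}\subseteq(-\infty,\mu]$: it suffices to show no real $E > \mu$ lies in $\overline{\Lambda(P)}$, and for this I would use the symmetrization identity already displayed in the paper, namely $e^{\rho/2h}P e^{-\rho/2h} = -[(hD)^2 + V + \tfrac14|\nabla\rho|^2 + \tfrac h2\Delta\rho] + \mu$, which shows $P$ is conjugate (by an $h$-dependent but, for the $O(h^\infty)$ quasimode question, harmless weight — here a little care is needed since $e^{\rho/2h}$ is unbounded, so I would instead argue via the numerical range) to a self-adjoint Schrödinger operator with spectrum in $(-\infty, -\gamma_0]$, equivalently: for real $E$, $\Re\la (P - E)u, u\ra = \la (-[(hD)^2 + V + \tfrac14|\nabla\rho|^2 + \tfrac h2\Delta\rho] + \mu - E)\tilde u,\tilde u\ra$ after the conjugation, which by \eqref{eqn:vfieldProp} ($|\nabla\rho|^2 \ge 4(\mu+\gamma_0)$) and $V \ge 0$ is $\le (\mu - (\mu+\gamma_0) - E)\|\tilde u\|^2 \le -(\gamma_0 + E - \mu)\|\tilde u\|^2$... — the clean statement is simply that the quadratic form $\Re\la P u,u\ra \le (\mu - \inf(V + \tfrac14|\nabla\rho|^2))\|u\|^2 \le -\gamma_0\|u\|^2 < \mu\|u\|^2$ in the weighted space, forcing $\|(P - E)u\| \ge (E-\mu)\|u\|$ for $E > \mu$ modulo lower-order $O(h)$ terms, which kills the $O(h^\infty)$ quasimode. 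The main obstacle I anticipate is the one-dimensional bracket computation: when $\nabla^2\rho \equiv 0$ the leading term $-2\la\xi,(\nabla^2\rho)\xi\ra$ vanishes, so one must move the base point off the exact minimum of $V$ and carefully check that for $E$ slightly less than $\mu$ there is a point where $V(x)$ takes the right value, $\rho'(x)\xi = 0$ is impossible unless $\xi = 0$ — so in fact for \eqref{eqn:L1d} one genuinely needs $\xi_0 = 0$, forcing $V(x_0) = \mu - E$ with $\rho'(x_0)V'(x_0)$ of the correct sign, which is where the two-sided structure of $V$ near its well (it has points with $V' > 0$ and points with $V' < 0$) must be used together with the sign of $\rho'$; reconciling the general-$d$ argument and this degenerate case into one clean statement is the delicate part.
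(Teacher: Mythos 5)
Your upper-bound direction (using $\sup_{(x,\xi)}\Re P(x,\xi)=\mu$) and your identification of the bracket $\{\Re P,\Im P\}=-2\la(\nabla^2\rho)\xi,\xi\ra+\la\nabla V,\nabla\rho\ra$ both match the paper, but the lower-bound inclusion $(-\infty,\mu]\subset\overline{\Lambda(P)}$ does not close as written. The plan to explicitly locate a point $(x_0,\xi_0)$ on each real level set with the bracket negative runs into two problems. First, the fallback of replacing $\xi_0$ by $-\xi_0$ to flip the sign of $-2\la(\nabla^2\rho)\xi_0,\xi_0\ra$ is vacuous: the Hessian form is even in $\xi$, so the sign is unchanged. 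Second, you correctly observe that when $\nabla^2\rho$ is unfavorable or vanishes (e.g.\ \eqref{eqn:L1d}) one is forced to $\xi_0=0$ and then needs an $x_0$ with $V(x_0)=\mu-E$ \emph{and} $\la\nabla V,\nabla\rho\ra(x_0)<0$, but you leave this essential step open (``reconciling \ldots\ is the delicate part''), so the argument is genuinely incomplete: under only \eqref{eqn:PotentialProp}--\eqref{eqn:vfieldProp} there is no local reason for such an $x_0$ to exist for every $E$.

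The paper circumvents this entirely by always taking $\xi_0=0$ (so the Hessian term never appears) and arguing globally by contradiction along the drift flow. Suppose the set of good energies is not dense; then there is a nonempty open $O\subset[0,\infty)$ with $\la\nabla V,\nabla\rho\ra(x)\ge 0$ for every $x\in V^{-1}(O)$. Let $\varphi_t$ be the flow of $\nabla\rho$ and set $f(t):=V(\varphi_t(x_0))$, with $V(x_0)=0$. Along the flow $\rho\circ\varphi_t$ is strictly increasing (by $|\nabla\rho|^2\ge 4(\mu+\gamma_0)>0$) and unbounded, so $\varphi_t(x_0)$ escapes every compact set in both time directions, and hence $f(t)\to\infty$ as $t\to\pm\infty$ by the coercivity of $V$. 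Since $f(0)=0$, $f$ must take each value $w\in O$ at least twice (once ``going up'' for $t<0$ and once for $t>0$), but the hypothesis says $f'(t)\ge 0$ whenever $f(t)\in O$; a short infimum argument then produces a contradiction. This works uniformly in any dimension with no case analysis, and is the key idea your local construction is missing.
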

\begin{proof}
First, observe that 
$$P(x,\xi)=-|\xi|^2+i\la \nabla \rho,\xi \ra-V(x)+\mu\quad \text{ and }\{\Re P,\Im P\}=-2\la \partial^2\rho\xi,\xi\ra+\la\nabla V,\nabla \rho\ra.$$
We have assumed $\Im z=0$. Therefore, we need only show that, for a dense subset $U\subset (-\infty, \mu]$, $y\in U$ implies that there exists $x$ such that (\ref{eqn:inPseudo}) holds for the symbol $P(x,\xi)$, at $(x,0)$ with $z=y$. 

We proceed by contradiction. Suppose there is no such $U$. Then, there exists $O\subset [0,\infty)$ open such that for all $x\in V^{-1}(O)$, $\la\nabla V, \nabla \rho\ra(x,0)\geq 0$. Let $\varphi_t:=\exp(ti\la\nabla \rho,D\ra)$ be the integral flow of $i\la\nabla\rho,D\ra$ and $x_0\in \re^d$ have $V(x_0)=0$. Define $f(t):= V(\varphi_t(x_0))$. Then $\partial_tf=\la\nabla V(\varphi_t(x_0)),\nabla \rho(\varphi_t(x_0))\ra$.

Suppose that $\varphi_t(x_0)$ escapes every compact set as $|t|$ increases. Then \eqref{eqn:vfieldProp} implies that $f(t)\to \infty$ as $|t|$ increases. Let $w\in O$ and $t_0:=\inf\left\{t\in \re\text{ : }f(t)=w\right\}$. Then $t_0$ is finite since $w\geq f(0)$ and $f(t)\to \infty$. Together, $f(t_0)=w\in O$ and $f^{-1}(O)$ open imply the existance of $\delta >0$ such that for $t\in (t_0-\delta,t_0+\delta)$, $f(t)\in O$. But, $f(t)\in O$ implies $f'(t)\geq 0$. Therefore, $f(t)\leq w$ for $t\in (t_0-\delta, t_0)$ and thus, since $f(t)\to \infty$, there exists $t<t_0$ such that $f(t)=w$, a contradiction. 

We have shown that there is a dense subset $U\subset (-\infty, \mu]$ with $U\subset \Lambda(P)$. Hence $(-\infty,\mu]\subset \overline{\Lambda (P)}$. Next, observe that $\sup\Re P(x,\xi)=\mu$ and thus, $\overline{\Lambda(P(x,hD))}\isect \{\Im z =0\}=(-\infty ,\mu]$ as desired.

To finish the proof, we need only show that $\varphi_t(x_0)$ escapes every compact set. Suppose the flow at $x_0$ exists for all $t\in \re$. Define $h(t):=\rho(\varphi_t(x_0))$. Then $\partial_t h=|\nabla \rho|^2\geq c>0$ and we have that $h\to\pm\infty $ as $t\to \pm\infty$. But, $\rho\in C^\infty$ and is therefore bounded on every compact set. Thus, $\varphi_t(x_0)$ escapes every compact set as $t\to \pm \infty$.

Now, suppose the flow at $x_0$ is not global. Then, $\varphi_t(x_0)$ is an integral curve of $i\la\nabla \rho,D\ra$ with $t$ domain a proper subset of $\re$. Thus, as proved in \cite[Lemma 17.10]{lee}, $\varphi_t(x_0)$ escapes every compact set.
\end{proof}

Putting this together with our discussion of the spectrum of $P(x,hD)$, we have that for $0<\mu$ and $\rho$ as in \eqref{eqn:vfieldProp}, although $\Spec(P)$ is bounded away from $\Re z\geq 0$, $\overline{\Lambda(P)}$ nontrivially intersects $\Re z\geq 0$. 

\begin{figure}[htbp]
\begin{center}
\includegraphics[width=3in]{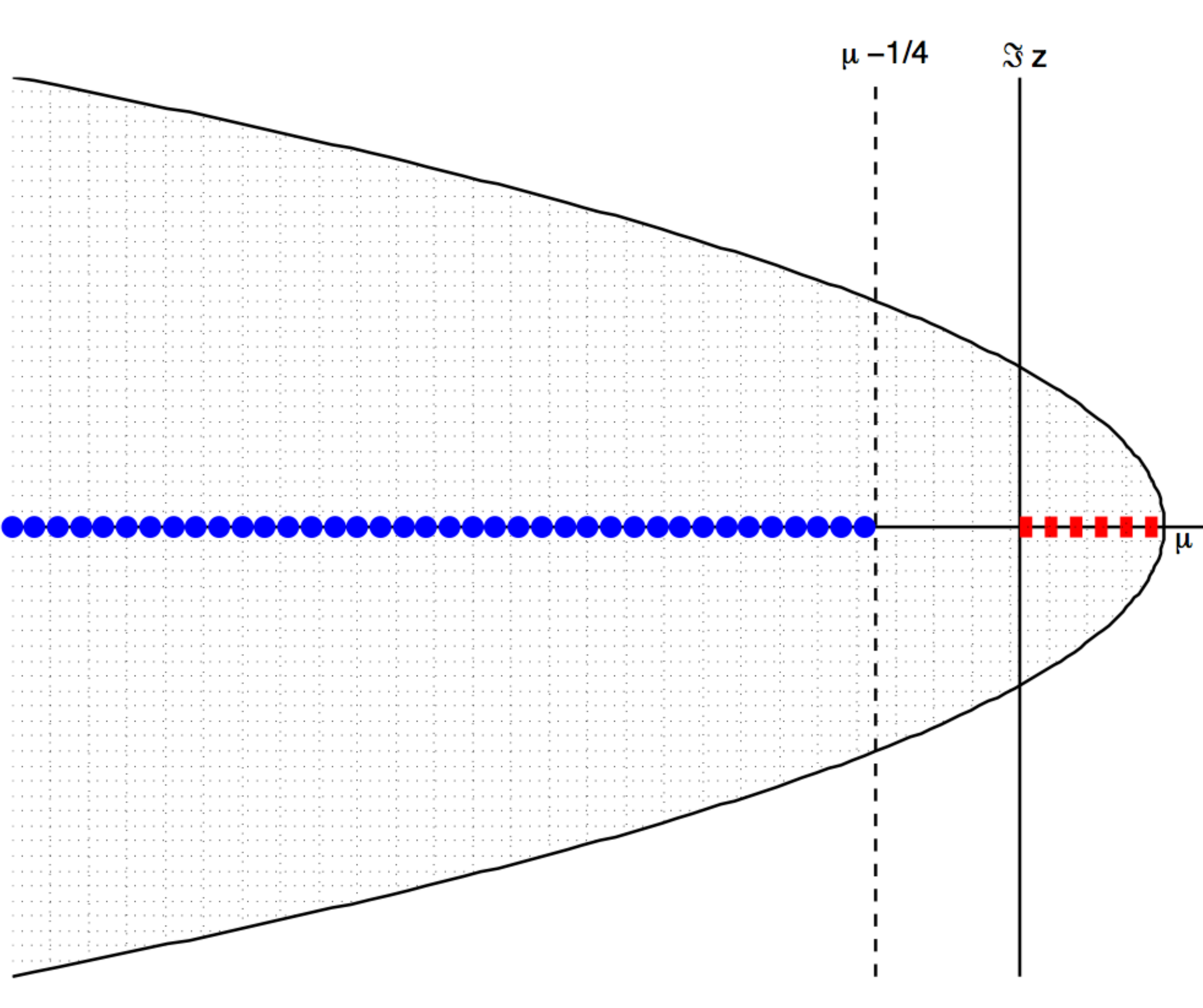}
\caption{\label{f:spectrum}
We see that the spectrum of (\ref{eqn:L1d}) (blue dots) is bounded away from $\Re z=0$, while the pseudospectrum (shaded region) enters the right half plane. The region for which we prove blow-up corresponds to the dashed red line. }
\end{center}
\end{figure}

For the specific case, $V(x)=|x|^2$, and  $\nabla\rho$ constant with $|\nabla \rho|=1$, the above argument gives us that 
$$\text{spec}\left(P(x, hD)\right)=\left\{-(2n+1)h+(\mu-\recip{4}):n\geq 0\right\}.$$
In addition, the pseudospectrum is given by, 
$$\overline{\Lambda\left(P(x, hD)\right)}=\left\{z:\Re z\leq -(\Im z)^2+\mu \right\}.$$

We see that for $\mu>\recip{4}$, the spectrum interesects the right half plane and so instability of $u\equiv 0$ is a classical result. However, for $0<\mu<\recip{4}$, the spectrum is bounded away from the $\Re z\geq 0$ and only the pseudospectrum enters the right half plane. Yet, in the regime $0<\mu<\recip{4}$, we will show that $u\equiv 0$ is unstable and, moreover, for arbitrarily small initial data, the solution blows up in finite time.

\section{One Dimensional Quasimodes}
\label{sec:odq}

We proceed by constructing quasimodes for operators in the one dimensional case with $i\la\nabla\rho,D\ra=\partial_x$. We implement WKB expansion for the quasimode following the method used in \cite{Davi98}. Let 
\begin{equation}
\label{eq:Pxh} P(x, h D) :=-\left(hD_x\right)^2+ihD_x+V \,,
\end{equation}
where $V\in C^\infty$ and $V$ may be complex.\\

\noindent{\bf Remark.} The following theorem is a special case of general theorems about 
quasimodes \cite[Theorems 2 and 2']{zworski04}. For the reader's
convenience we present a direct proof in the spirit of Davies \cite{Davi98}. 
\begin{theorem}
\label{prop:constructQuasi}
Suppose that $ P ( x , h D) $ is given by \eqref{eq:Pxh} and that
\[  z = - \xi_0^2 + i \xi_0  + V ( x_0 ) \,,\]
where $ x_0 $ satisfies the condition that 
\[  \Re V' ( x_0 ) > 0  \,. \]

There exists an $h-$dependent function $\varphi\in C_c^\infty(\re)$, 
such that $\|\varphi\|_{L^2}=1$ and
$$\|(P(x, h D) -z)\varphi\|_{L^2} = O (
h^\infty ) . $$

In addition $ \varphi $ is microlocalized to $ ( x_0, \xi_0) $ in the 
sense that for every $ g \in \CIc ( \RR^n \times \RR^n ) $ vanishing
in a neighbourhood of $ ( x_0, \xi_0 ) $,
\[ \|   g ( x, h D ) \varphi \|_{L^2}= O ( h^\infty ) \,. \]

When $ V $ is real analytic than we can find $\varphi $ such that
$$\|(P(x, h D) -z)\varphi\|_{L^2}\leq C \exp (
- 1 / C h ) \,. $$

%When $ V $ is real valued and when $ z $ is real then we can find 
%!$ \varphi ( x ) \geq 0 $.
\end{theorem}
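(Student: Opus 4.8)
The plan is to construct $\varphi$ by a standard WKB (complex eikonal) ansatz $\varphi(x) = h^{-1/4} a(x;h) e^{i\psi(x)/h}$, with the amplitude $a$ cut off to a small neighbourhood of $x_0$, and to determine the phase $\psi$ and amplitude from the requirement that $(P(x,hD)-z)\varphi = O(h^\infty)$ formally. Writing $P(x,hD) = -(hD_x)^2 + ihD_x + V(x)$ and applying it to $e^{i\psi/h}$, the leading ($h^0$) term forces the eikonal equation
\[
(\psi')^2 - \psi' + V(x) - V(x_0) + \xi_0^2 - i\xi_0 = 0 \,,
\]
i.e. $(\psi' - \tfrac12)^2 = (\xi_0 - \tfrac12)^2 - (V(x)-V(x_0))$. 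I would choose the branch with $\psi'(x_0) = \xi_0$, so that near $x=x_0$ one has $\psi'(x) = \xi_0 + \tfrac{-V'(x_0)}{2\xi_0 - 1}(x-x_0) + O((x-x_0)^2)$; the key point is that the condition $\Re V'(x_0) > 0$ is exactly what makes $\Im \psi$ have a strict local minimum at $x_0$ (after possibly also checking the sign bookkeeping with the $i\xi_0$ term, or replacing $\xi_0$-normalization as needed), so that $|e^{i\psi/h}|= e^{-\Im\psi/h}$ is a genuine Gaussian-type bump concentrated at $x_0$ of width $\sim h^{1/2}$. This gives the $L^2$-normalization (after fixing the constant) and the microlocalization statement: on $\mathrm{supp}\, a$ the function is $O(h^\infty)$-concentrated at frequency $\psi'(x) \approx \xi_0$, so $g(x,hD)\varphi = O(h^\infty)$ whenever $g$ vanishes near $(x_0,\xi_0)$, by the usual non-stationary phase / pseudodifferential calculus argument.

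Next, the $h^1$ term gives the first transport equation, a linear first-order ODE for the leading amplitude $a_0$, which is solvable near $x_0$ since $2\psi' - 1 \ne 0$ there (as $\psi'(x_0) = \xi_0$ and we are on the branch with $2\xi_0 - 1 \ne 0$; if $\xi_0 = \tfrac12$ one reorganizes, but generically this is fine). Then I would iterate: positing $a \sim \sum_{j\ge 0} h^j a_j$ and solving the successive transport equations for $a_j$, each a linear inhomogeneous ODE with the same non-vanishing coefficient, so all $a_j$ exist on a fixed neighbourhood of $x_0$. Borel summation produces a genuine smooth $a(x;h)$ with the full asymptotic expansion; truncating the sum and inserting a fixed cutoff $\chi$ supported near $x_0$ with $\chi \equiv 1$ on a smaller neighbourhood gives $\varphi = h^{-1/4}\chi(x) a(x;h) e^{i\psi/h}$. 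The error $(P-z)\varphi$ then consists of (i) the Borel-summation tail, which is $O(h^\infty)$, and (ii) commutator terms $[P,\chi]$ supported where $\chi' \ne 0$, i.e. away from $x_0$, where $e^{-\Im\psi/h}$ is exponentially small — hence also $O(h^\infty)$. This establishes the $C^\infty$ statement.

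For the real-analytic refinement the strategy is the same but one must control the expansion's growth: when $V$ is real analytic, $\psi$ and each $a_j$ are analytic on a fixed complex neighbourhood of $x_0$, and the transport recursion (being of Cauchy–Kovalevskaya type with a nonvanishing leading coefficient) yields the Cauchy-type bounds $\|a_j\|_{L^\infty(K)} \le C^{j+1} j!$ on a slightly shrunk real interval $K$. One then does not Borel-sum but optimally truncates: set $a = \sum_{j < \delta/h} h^j a_j$ for a small constant $\delta$, so that the truncation error in the symbol is $\le C (Ch)^{\delta/h}(\delta/h)! \lesssim e^{-1/Ch}$ by Stirling, while the cutoff commutator term is still exponentially small since $\Im\psi \ge c > 0$ on $\mathrm{supp}\,\chi'$. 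Combining, $\|(P(x,hD)-z)\varphi\|_{L^2} \le C e^{-1/Ch}$.

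The main obstacle is the bookkeeping in the real-analytic case: one needs the factorial bounds on $\|a_j\|$ on a \emph{fixed} (i.e. $h$-independent) neighbourhood $K$ of $x_0$, which requires tracking how the radius of analyticity shrinks at each step of the transport recursion and verifying it stabilizes — the standard device is a majorant/Cauchy-estimate argument on a nested family of disks, or equivalently a Lagrange-inversion description of $\psi$ together with analytic ODE estimates. A secondary, purely technical point is to make sure the branch of the square root defining $\psi'$ is chosen consistently (globally on $\mathrm{supp}\,\chi$) and that $\Re V'(x_0)>0$ indeed delivers $\Im\psi$ with a nondegenerate minimum at $x_0$ rather than a saddle or a maximum; this is a one-line second-derivative computation but it is where the hypothesis is actually used.
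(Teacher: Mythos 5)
Your proposal follows essentially the same route as the paper: a WKB ansatz $a(x;h)e^{i\psi(x)/h}$ with a spatial cutoff, the complex eikonal equation and the transport hierarchy for $a_j$, and in the analytic case factorial-type bounds $\|a_j\|\le C^{j+1}j!$ on a fixed neighbourhood of $x_0$ (via Cauchy estimates, exactly the content of the paper's Lemma~\ref{l:ana}) combined with optimal truncation at $N\sim 1/(Ch)$. One small algebraic slip worth fixing: from $P=-(hD_x)^2+ihD_x+V$ the eikonal equation is $(\psi')^2-i\psi'=V-z$, which completes the square as $(\psi'-\tfrac{i}{2})^2=(\xi_0-\tfrac{i}{2})^2+V(x)-V(x_0)$ rather than $(\psi'-\tfrac12)^2$; correspondingly the transport coefficient is $(1+2i\psi')^{-1}$, which is nonzero for every real $\xi_0$ (since $1+2i\xi_0\neq 0$), so the caveat you raise about $\xi_0=\tfrac12$ evaporates once the $i$'s are in place, and the condition $\Re V'(x_0)>0$ then yields $\Im\psi\asymp x^2$ near $x_0$ as you anticipate.
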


%\medskip

\begin{proof}
Let $\chi\in C_c^\infty(\re)$ with $\chi(x)=1$ if $|x|<\delta/2$ and $\chi(x)=0$ if $|x|>\delta$ where $\delta $ will be determined below. Define $f:=\exp(i\psi/h)a(x)$ where 
$$a(x)=\sum_{m=0}^{N-2}a_m(x)h^m.$$ 
Finally, let $g(x_0+x):=\chi(x)f(x)$ for all $x\in \re$. 

We will find appropriate $a_m$ and $\psi$ in what follows. First, by a simple computation
$$\left(P(x, h D)-z \right)f=\left(\sumLim{m}{0}{N}h^m\phi_m\right)e^{i\psi/h}$$
where $\phi_m$ are inductively defined by
\begin{eqnarray}
\phi_m&:=&(-(\psi ')^2+i\psi '+V-z)a_m+i\psi '' a_{m-1}+(2i\psi '+1)a_{m-1}'+a_{m-2}'',\nonumber
\end{eqnarray}
where we use the convention that $a_m\equiv 0$ for $m>N-2$ or $m<0$. Now, we set $\phi_m=0$ for $0\leq m\leq N-1$. Given that $\delta$ is small enough, this will enable us to determine all $a_m$ as well as $\psi$.

Observe that, using the condition, $\phi_0=0$, we obtain
$$\psi '^2-i\psi '=V-z.$$
Now, letting $z=-\xi_0^2+i\xi_0+V(x_0)$, we have a complex eikonal equation
$$\left(\psi '-\frac{i}{2}\right)^2=V(x_0+x)-V(x_0)+\left(\xi_0-\frac{i}{2}\right)^2.$$
Then, letting $\tilde{\psi}=\psi-\frac{i}{2}x$, we have
\begin{eqnarray}
\tilde{\psi}(x)&:=&\int_{0}^x\left(V(x_0+t)-V(x_0)+\left(\xi_0-\frac{i}{2}\right)^2\right)^{1/2}dt\nn 
&=&\int_0^x\left(\xi_0-\frac{i}{2}\right)\left(1+\left(\xi_0-\frac{i}{2}\right)^{-2}\left(V(x_0+t)-V(x_0)\right)\right)^{1/2}dt\nn 
&=&\left(\xi_0-\frac{i}{2}\right)\left(x+\recip{4}\left(\xi_0-\frac{i}{2}\right)^{-2}V'(x_0)x^2+O(x^3)\right)\nonumber
\end{eqnarray}
and hence
$$\psi=\xi_0x+\frac{\left(\xi_0+\frac{i}{2}\right)}{4\left(\xi_0^2+\frac{1}{4}\right)}V'(x_0)x^2+O(x^3).$$

Now, we have assumed that $\Re V'(x_0)>0$. Therefore there exists $\gamma>0$ such that
$$\gamma x^2\leq\Im\psi(x)\leq 3\gamma x^2$$
for all small enough $x$ and $h$. Also, for $x$ and $h$ small enough 
$$\theta:=(2i\psi '+1)^{-1}$$ 
satisifies $|\theta(x)|\leq \beta$. We choose $\delta>0$ small enough so that these conditions both hold for $0<h<\delta^2$ and $|x|<\delta$. 

The condition $\phi_{m+1}=0$, implies
\begin{eqnarray}
a_m'&=&-\theta(i\psi ''a_m+a_{m-1}'')\nonumber
\end{eqnarray}
with the convention that $a_{-1}\equiv 0$ and initial conditions, 
$$a_0(0)=1,\quad a_m(0)=0,\text{ }m>0.$$
Putting $G(x):=\int_0^xi\psi '' (y)\theta(y)dy$ we obtain $a_0=\exp(-G(x))$ and 
\begin{equation}
\label{eqn:inductA}
a_{m+1}(x):=-e^{-G(x)}\int_0^xe^{G(y)}\theta(y)a_m''(y)dy,\text{ }m> 0.
\end{equation}

Before proceeding to show exponential error for $V$ analytic, we show $O(h^N)$ error for arbitrary $V$.
To complete the proof of $O(h^N)$ quasimodes, we need to estimate
$$\|(P(x, h D) -z)g\|_{L^2}/\|g\|_{L^2}.$$
Let $C$ denote various positive constants that are independent of $h$ and $x$. Then,
\begin{eqnarray}
\|g\|_{L^2}^2&\geq& \int_{-\delta/2}^{\delta/2}|f(x)|^2dx\geq\int_{-\delta/2}^{\delta/2}e^{-6\gamma x^2h^{-1}-C}dx\nn 
&=&\int_{-\delta h^{-1/2}/2}^{\delta h^{-1/2}/2}e^{-6\gamma t^2-C}h^{1/2}dt\geq\int_{-1/2}^{1/2}e^{-6\gamma t^2-C}h^{1/2}dt=Ch^{1/2}.\label{eqn:gLowerBound} 
\end{eqnarray}
Next, we compute
\begin{eqnarray}\|\left(P(x, h D)-z\right)g\|_{L^2}&=&\|h^2f\chi ''+2h^2f'\chi '+hf\chi ' +\chi(P(x, h D) f-zf)\|_{L^2}\nn 
&\leq &h^2\|f\chi ''\|_{L^2}+2h^2\|f'\chi '\|_{L^2} +h\|f\chi '\|_{L^2}+\|h^{N}\chi\phi_{N}e^{i\psi/h}\|_{L^2}.\label{eqn:expressionWithCutoff} 
\end{eqnarray}
Thus, we need to estimate each of the norms. Note that $\chi ' $ and $\chi ''$ have support in $\{x:\delta/2\leq |x|\leq \delta\}$. Thus, we have 
\begin{eqnarray}
\|f\chi ''\|_{L_2}^2&\leq &C_3\int_{\delta/2\leq |x|\leq \delta }e^{-2\gamma x^2h^{-1}+C}dx\leq Ce^{-\gamma \delta^2/2h}.\label{eqn:expErrorAfterCut1}
\end{eqnarray}
Similarly, 
\begin{equation}
\label{eqn:expErrorAfterCut}
\|f'\chi ' \|^2_{L^2}\leq Ce^{-\gamma \delta^2/2h},\quad
\|f\chi '\|^2_{L^2}\leq Ce^{-\gamma \delta^2/2h}.
\end{equation}
Next, observe that 
\begin{eqnarray}
\|h^N\chi\phi_{N}e^{i\psi/h}\|_{L^2}^2&\leq &h^{2N}\|\phi_N\|^2_{L^\infty}\int_{-\delta}^{\delta}e^{-2\gamma x^2h^{-1}+C}dx\leq Ch^{2N}\|\phi_N\|_{L^\infty}^2\int_{-\delta h^{-1/2}}^{\delta h^{-1/2}}e^{-2\gamma x^2+C}h^{1/2}dx\nn 
&\leq &Ch^{2N}\|\phi_N\|_{L^\infty}^2\int_{-\infty}^\infty e^{-2\gamma x^2+C}h^{1/2}dx\leq C\left(h^N\|\phi_N\|_{L^\infty}\right)^2 h^{1/2}.\label{eqn:gUpperBound}
\end{eqnarray} 
Now, $|\phi_m|\leq c_m$ on $\{x:|x|\leq \delta\}$, uniformly for $h\leq\delta^2$. Therefore, 
combining \eqref{eqn:expressionWithCutoff} with inequalities \eqref{eqn:gLowerBound}, \eqref{eqn:expErrorAfterCut1}, \eqref{eqn:expErrorAfterCut}, and \eqref{eqn:gUpperBound}, gives $O(h^N)$ quasimodes for arbitrary $N$. We then normalize to obtain $\varphi$. 

We will now assume that $V(x)$ is real analytic and prove exponential smallness of the error.
\begin{lemma}
\label{l:ana}
Let $ I = [ - \delta, \delta ] $ where $ \delta $ is a small
constant.  Suppose that $ \tau_0$, $\tau_1$, $\tau_2$, and $ d_0 $ are holomorphic
functions of $ z \in \Omega $ and $|\tau_2|\geq\recip{C}$ for some $C>0$ where $ \Omega $ is a neighborhood
of $ I $ in $ \mathbb C$. 

If $ d_m $ is defined inductively by 
\begin{equation}
\label{eq:bmp}  d_{m+1} ( z ) =  \int_0^z \tau_2( \zeta ) d_m'' ( \zeta )+\tau_1(\zeta)d_m'(\zeta)+\tau_0(\zeta)d_m(\zeta)dy .
\end{equation}

Then for some $ C_1 > 0 $, $C_2 >0$ and $[-\delta,\delta]\subset\tilde{\Omega}\subset \Omega$,
\begin{equation}
\label{eq:am1}
\sup_{\tilde{\Omega}} | \partial ^pd_m | \leq  C_2^{p+1}p!C_1^{m+1}  m^m . 
\end{equation}
\end{lemma}
\begin{proof}
Using integration by parts, we obtain that 
$$d_{m+1}(z)=\tau_2(z)d_m'(z)-\tau_2(0)d_m'(0)+(\tau_1(z)-\tau_2 '(z))d_m(z)+\int_0^z (\tau_2 ''(\zeta)-\tau_1'(\zeta)+\tau_0(\zeta))d_m(\zeta)d\zeta.$$
Then, since $\tau_2$ is holomorphic in $\Omega$ and $\inf_\Omega|\tau_2|\geq \recip{C}$ for some $C>0$, we make the conformal change of variables $z\to w$ where 
$$\frac{d w}{dz}=\tau_2(z)^{-1}.$$
Then, letting $b_m=d_m(z(w))$, we have 
$$\partial^p_w(b_{m+1})(w)=\partial^{p+1}_w b_m(w)-\delta_{p0}(\partial_wb_m)(0)+\partial_w^p(\rho_0b_m)(w)+\partial_w^p\int_0^{z(w)}\rho_1(\zeta)b_m(\zeta)d\zeta$$
where $\rho_0=(\tau_1-\tau_2 ')(z(w))$ and $\rho_1=((\tau_2''-\tau_1'+\tau_0)\tau_2)(z(w)).$ Put $\Omega_w:=\{w:z(w)\in \Omega\}$. Then, since the change of variables was conformal, and $\tau_i$, $i=0,1,2$, are holomorphic, we have that there exists $C_\rho>0$ such that 
$$|\partial^p_w\rho_i|_{\Omega_w}\leq C_\rho^{p+1}p^p\quad \text{ for }i=0,1$$
where we define $|f|_\Omega:=\sup_\Omega |f|$ for a function $f$ defined on $\Omega$. 

We claim that for some $C_1>0$, $C_0>C_\rho$,
$$|\partial^p_wb_m|_{\Omega_w}\leq C_0^{p+1}C_1^{m+1}(m+p)^{m+p}.$$ 
We prove the claim by induction. The holomorphy of $b_0$ gives us the base case. We now prove the inductive case. 

By the inductive hypothesis, we have that 
\begin{equation}
\label{eqn:term1b}
|\partial^{p+1}_wb_{m}|_{\Omega_w}\leq C_0^{p+2}C_1^{m+1}(m+p+1)^{m+p+1}.
\end{equation}
Similarly,
\begin{equation}
\label{eqn:term2b}
|b_{m}(0)|_{\Omega_w}\leq C_1^{m+1}m^m.
\end{equation}

Next, we prove similar estimates for $\partial_w^p(\rho_0 b_m)$. 
By Leibniz rule, we have that 
\begin{equation}
\label{eqn:combinationDerivative}
|\partial^p (\rho_0b_m)|_{\Omega_w}=\left|\sum_{k=0}^p\frac{p!}{k!(p-k)!}\partial^{k}\rho_0\partial^{p-k}b_m\right|_{\Omega_w}\leq \sum_{k=0}^pC_0^{p+2}C_1^{m+1}r_{k,m,p}\end{equation}
where
$$r_{k,m,p}:= \frac{p!}{k!(p-k)!}k^k(m+p-k)^{m+p-k}.$$
We claim that for $0\leq k\leq\frac{p}{2}$, $r_{k,m,p}\geq r_{p-k,m,p}$. 
To see this, we write this inequality as
$$k^k(m+p-k)^{m+p-k}\geq (p-k)^{p-k}(m+k)^{m+k},\quad \text{ for }0\leq k\leq \frac{p}{2}.$$
Putting $x:=\frac{k}{m}$ and $y=\frac{p-k}{m}$, the inequality is equivalent to
$$x^x(1+y)^{1+y}\geq y^y(1+x)^{1+x}, \quad \text{for }0\leq x\leq y$$
which follows from the monotonicity of the function $x\mapsto \left(\frac{1+x}{x}\right)^x(1+x).$

Next, observe that, $0\leq k<p-1$,
\begin{eqnarray}
\frac{r_{k+1,m,p}}{r_{k,m,p}}&=&\frac{p-k}{k+1}\frac{(k+1)^{k+1}}{k^k}\frac{(m+p-k-1)^{m+p-k-1}}{(m+p-k)^{m+p-k}}\nn 
&=&\frac{p-k}{m+p-k-1}\left(1+\recip{k}\right)^k\left(1-\recip{m+p-k}\right)^{m+p-k}\nn 
&\leq &\frac{p-k}{m+p-k-1}e^{1-1+\recip{2(m+p-k)}}\leq e^{\recip{2(m+p-k)}},\nonumber 
\end{eqnarray}
where we use $\log (1-x)\leq -x+\frac{x^2}{2}$. Then, since for $0\leq k\leq \frac{p}{2}$, $r_{k,m,p}\geq r_{p-k,m,p}$, we have 
\begin{eqnarray}
|\partial^{p} (\rho_0b_m)|_{\Omega_w}&\leq& 2C_0^{p+2}C_1^{m+1}\sum_{k=0}^{\frac{p}{2}+1}r_{k,m,p} \leq 2C_0^{p+2}C_1^{m+1}\sum_{k=0}^{\frac{p}{2}+1}r_{0,m,p}\prod_{n=0}^{k-1}e^{\recip{2(m+p-n)}}\nn 
&\leq &2C_0^{p+2}C_1^{m+1}\sum_{k=0}^{\frac{p}{2}+1}r_{0,m,p}e^{\frac{k}{2m+p}}\leq 2C_0^{p+2}C_1^{m+1}\sum_{k=0}^{\frac{p}{2}+1}r_{0,m,p}e^{\frac{p+2}{4m+2p}}\nn 
&\leq &C_0^{p+2}C_1^{m+1}(p+2)r_{0,m,p}e^{\frac{1}{2}}\leq e^{\recip{2}}C_0^{p+2}C_1^{m+1}(m+p+1)^{m+p+1}\nonumber 
\end{eqnarray}

Therefore, there exists $M_1>0$ such that 
\begin{equation}
\label{eqn:term3b}
|\partial^p_w(\rho_0b_m)|_{\Omega_w}\leq M_1C_0^{p+2}C_1^{m+1}(m+p+1)^{m+p+1}.
\end{equation}
By analagous argument, there exists $M_2>0$ such that 
\begin{equation}
\label{eqn:term4b}
\left|\partial^p_w\int_0^{z(w)}\rho_1(\zeta)b_m(\zeta)d\zeta\right|_{\Omega_w}=\left|\partial^{p-1}_w((\rho_1\partial_w z)b_m)\right|_{\Omega_w}\leq M_2C_0^{p+1}C_1^{m+1}(m+p)^{m+p}.
\end{equation}

Next, choose $C_1>C_0\left(4\max(M_1,M_2,1)\right).$ Then, combining \eqref{eqn:term1b}, \eqref{eqn:term2b}, \eqref{eqn:term3b}, and \eqref{eqn:term4b}, we have 
$$|\partial^p_wb_{m+1}(w)|_{\Omega_w}\leq C_0^{p+1}C_1^{m+2}(m+p+1)^{m+p+1}.$$
Then, since $w\to z$ is a change of variables independent of $m$ which maps $\Omega_w\to \Omega$ and $b_m(w)=d_m(z(w))$, we have
$$|d_m|_{\Omega}= |b_m|_{\Omega_w}\leq C_0C_1^{m+1}m^m.$$
Now, choose $\tilde{\Omega}\subset \Omega$ with $\inf\{|z-\zeta|:z\in \tilde{\Omega},\text{ }\zeta\in\partial \Omega
\}>\gamma>0$. Then, since $d_m$ are holomorphic, we apply Cauchy estimates to obtain 
$$|\partial^p d_m|_{\tilde{\Omega}}\leq \gamma^{-p}p!|d_m|_\Omega\leq \gamma^{-p}p! C_0C_1^{m+1}m^m.$$
\end{proof}

We apply the lemma with 
$  d_m ( x )   := e^{G ( x ) } a_m ( x ) $, $ \tau_2:=-\theta
$, $\tau_1:=2\theta G'$, and $\tau_0:=\theta(G''-(G')^2)$ where $\theta$ and $G$ are given above.
 Analyticity of $ V $ implies that $ a_0 $, $ \theta $, and $\psi$ are
 holomorphic in a neighbourhood of $ I$.

Then, putting $1/N={eC_1 h}$, using Lemma \ref{l:ana} and that $\psi$ is real analytic, we have 
\begin{eqnarray}
\sup_{x\in [-\delta,\delta]}|h^N\phi_N|&\leq& h^NC(C_2C_1^{N}(N-1)^{N-1}+2C_2^2C_1^{N}(N-1)^{N-1}+6C_2^3C_1^N(N-2)^{N-2})\nn 
&\leq& C h^NC_1^{N}N^{N}\leq C (h^NC_1^{N}\left(\recip{eC_1h}\right)^N)= C e^{-N}= Ce^{-\recip{eC_1h}}\label{eqn:expSmall}
\end{eqnarray}
where $C$ denotes various postive constants that are independent of $N$.
Finally, combining \eqref{eqn:expressionWithCutoff} with inequalities \eqref{eqn:gLowerBound}, \eqref{eqn:expErrorAfterCut1}, \eqref{eqn:expErrorAfterCut}, \eqref{eqn:gUpperBound}, and \eqref{eqn:expSmall} gives $O(e^{-\recip{Ch}})$ quasimodes. We then normalize $g$ to obtain $\varphi$. 
\end{proof}

Now, applying this result to $P_1(x, hD)$, we obtain
$$\psi=\int_0^x\frac{i}{2}+\left(\xi_0-\frac{i}{2}\right)\left(1-\frac{2x_0t+t^2}{\left(\xi_0-\frac{i}{2}\right)^2}\right)^{1/2}dt$$
for $x_0<0$ and $z=-\xi_0^2+i\xi_0+\mu-x_0^2$.

\begin{figure}[htbp]
\begin{center}
\includegraphics[width=6in]{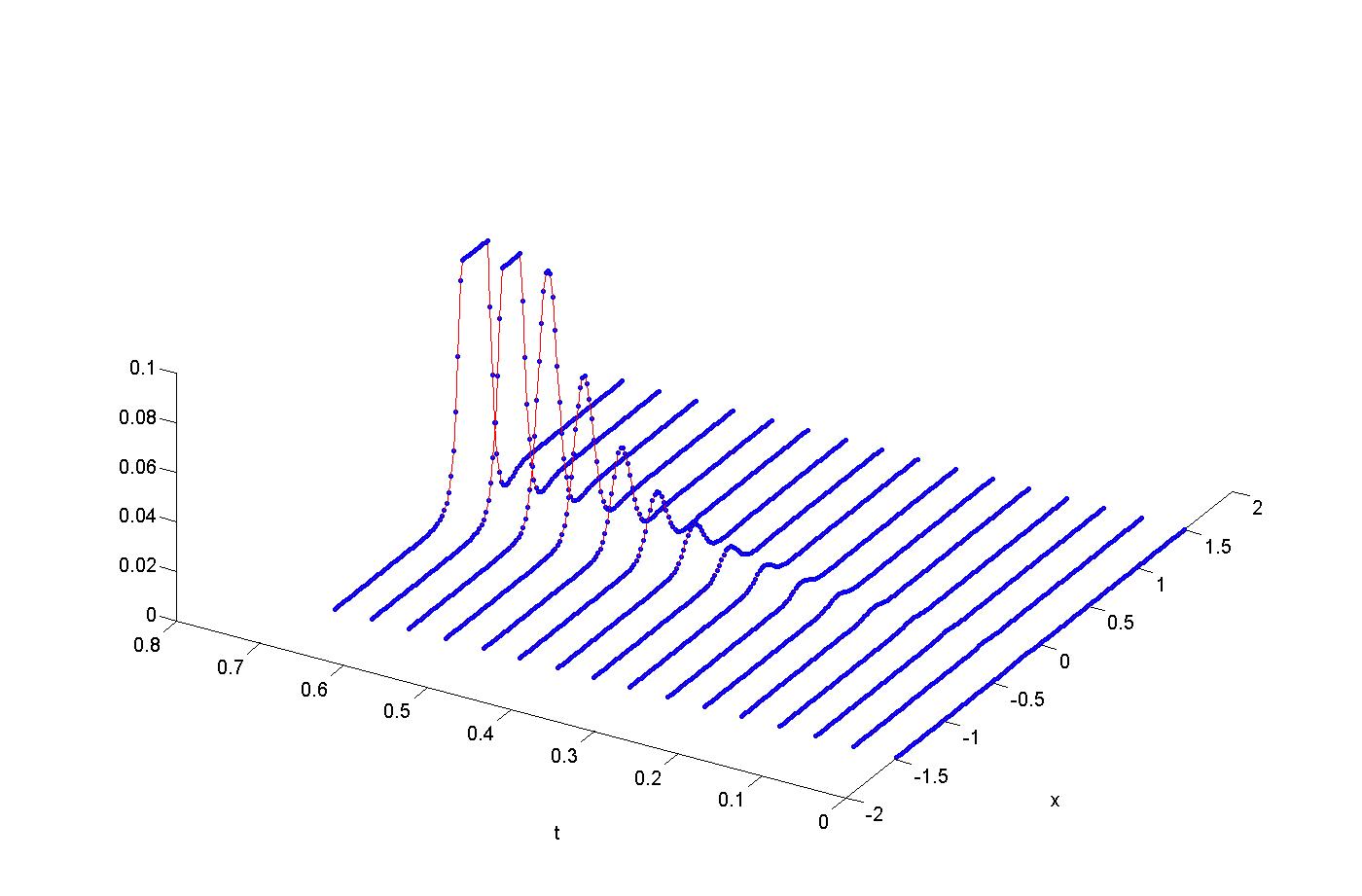}
\caption{\label{f:3}
We set $h=10^{-2}$ and see that the difference between the solution to (\ref{eqn:main}) with initial data a quasimode with error $O(h^2)$ (red line) and the solution with initial data a quasimode with error $O(h^3)$ (blue dots) is negligible. Thus, by using $O(h^3)$ error quasimodes, we have not introduced large error into our numerical calculations.}
\end{center}
\end{figure}

\section{Instability}
\label{sec:i}

\begin{figure}[htbp]
\begin{center}
\includegraphics[width=6in]{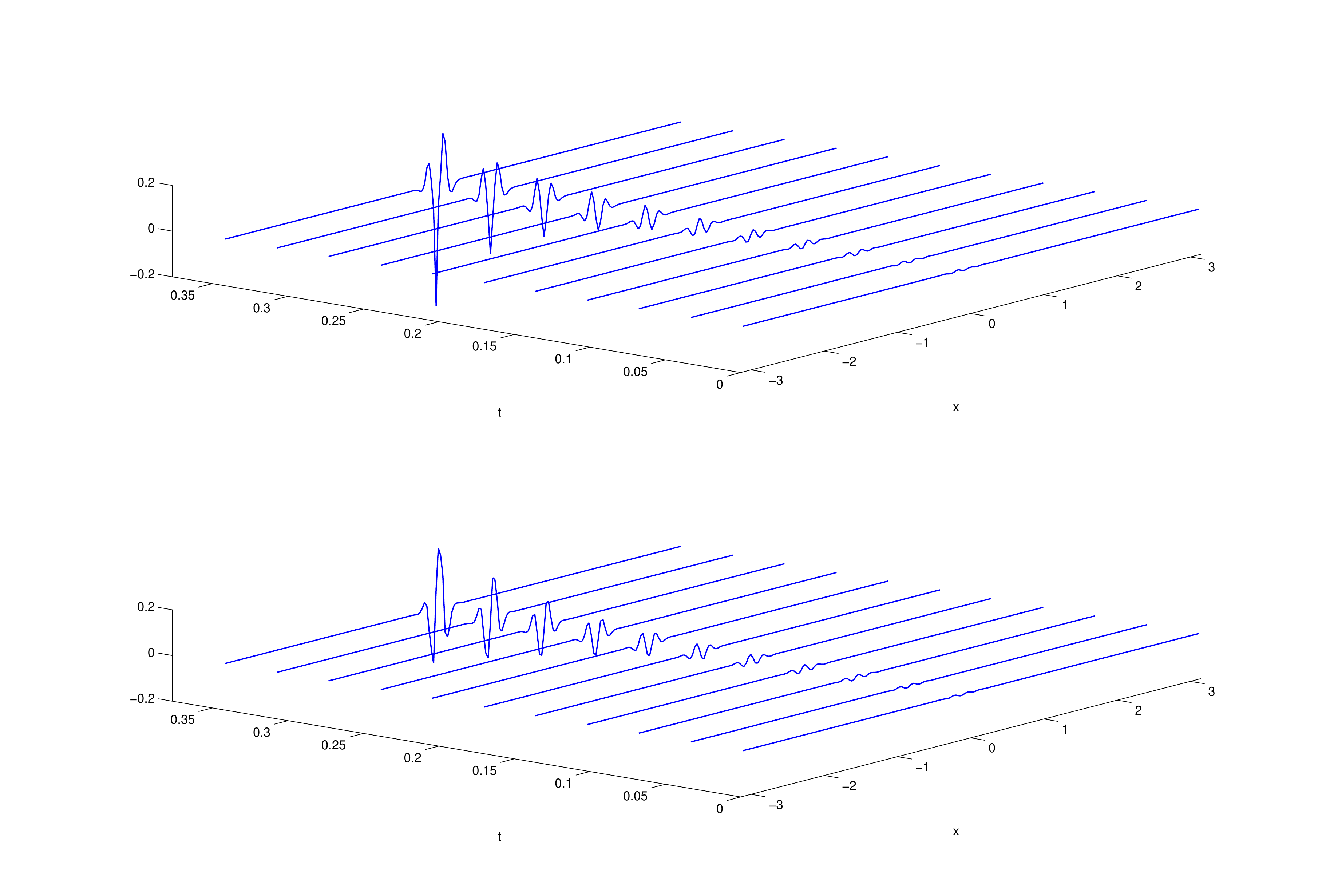}
\caption{\label{f:4}
We show a numerical simulation $u(t)$ of the evolution of \eqref{eqn:main} with a quasimode at imaginary energy as initial data. Specifically, we set $h=1/193$ and use a quasimode with eigenvalue $z=\recip{16}+\frac{i}{4\sqrt{2}}$. The real part is shown in the top graph and the imaginary part in the bottom graph. We see that although subsolution methods do not apply to these quasimondes, blow up still occurs in time $\approx 0.35$.}
\end{center}
\end{figure}

Our approach to obtaining blow-up of (\ref{eqn:main}) will follow that used by Sandstede and Scheel in \cite{Sand03}. We will first demonstrate that, from small initial data, we obtain a solution that is $\geq 1$ on a deformed ball in time $t_1=O(1)$. We will then use the fact that the solution is $\geq 1$ on this region to demonstrate that after an additional $t_2=O(h)$, the solution to the equation blows up.

First, we prove that there exists initial data so that the solution to (\ref{eqn:main}) is $\geq 1$ in time $O(1)$. Recall that $\varphi_t$ denotes the flow of $i\la\nabla \rho,D\ra$. 
\begin{lemma}
\label{lem:O(1)ndim}
Fix $\mu>0$, $\alpha<\mu$, $0<\e\leq\recip{2}(\mu-\alpha)$, and $(x_0,a,\delta)\in \re^d\times \re\times \re^+$ such that both $\varphi_t(B\left(x_0,2a\right))\subset V^{-1}[0,\mu-\alpha -\e]$ for $t\leq \delta$ and $\varphi_t$ is defined on $B\left(x_0,2a\right)$ for $0\leq t<2\delta$. Then, for each 
$$0<h<h_0$$
 where $h_0$ is small enough, there exists
$$u_0(x)\geq 0,\quad \|u_0\|_{C^P}\leq \exp\left(-\recip{Ch}\right),\quad p=0,1,...$$ 
and $0<t_1<\delta$ so that the solution to (\ref{eqn:main}) with initial data $u_0$ satisfies $u(x,t_1)\geq 1$ on $x\in \varphi_{t_1}(B(x_0,a))$.
\end{lemma}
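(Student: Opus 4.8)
The plan is to exploit the conjugation identity from Section~\ref{sec:spectra} to turn $P(x,hD)$ into a Schr\"odinger operator, build an explicit positive subsolution from the ground state of a harmonic-oscillator--type operator, and then invoke the parabolic maximum principle to push the solution of \eqref{eqn:main} above $1$ on the transported ball. Concretely, recall that $e^{\rho/2h}P(x,hD)e^{-\rho/2h} = -\bigl[(hD)^2 + W\bigr] + \mu$ where $W := V + \tfrac14|\nabla\rho|^2 + \tfrac{h}{2}\Delta\rho$. So if $v$ solves $hv_t = -[(hD)^2+W]v + \mu v + (e^{\rho/2h}v)^2 v \cdot e^{-\rho/h}\cdots$ — actually the cleanest route is the reverse: set $u = e^{-\rho/2h}v$, so that $u$ solves \eqref{eqn:main} iff $hv_t = \bigl(-[(hD)^2+W]+\mu\bigr)v + e^{-\rho/h}v^3$. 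Since the nonlinearity $e^{-\rho/h}v^3 \geq 0$ whenever $v \geq 0$, any nonnegative subsolution of the \emph{linear} conjugated equation $hv_t = (-[(hD)^2+W]+\mu)v$ is a subsolution of the full nonlinear conjugated equation, hence — after multiplying back by $e^{-\rho/2h}$ and using that the transport flow $\varphi_t$ moves mass — gives a lower bound for $u$. (One must be slightly careful: $\rho$ is only controlled through its derivatives, but on the compact sets involved this is harmless.)

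The key steps, in order:
\begin{enumerate}
\item \textbf{Choice of initial data.} Take $u_0(x) = e^{-\rho(x)/2h}\,\beta\,\chi(x)\,e^{-|x-x_0|^2/(2h)}$ for a suitable cutoff $\chi$ supported in $B(x_0,2a)$, $\chi \equiv 1$ on $B(x_0,a)$, and $\beta = e^{-1/(Ch)}$ chosen so that $\|u_0\|_{C^p} \leq e^{-1/(Ch)}$ for all $p$ (the Gaussian width $\sqrt h$ makes all $h$-derivatives polynomially bounded, and the factor $\beta$ and the $e^{-\rho/2h}$ absorb everything into the exponential). Crucially $u_0 \geq 0$.
\item \textbf{Linear lower bound via maximum principle / Duhamel.} Let $w$ solve the linear conjugated heat equation $hw_t = (-[(hD)^2+W]+\mu)w$ with $w(\cdot,0) = e^{\rho/2h}u_0 = \beta\chi e^{-|x-x_0|^2/2h} \geq 0$. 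By the parabolic maximum principle $w \geq 0$, and then $v := e^{\rho/2h}u$ satisfies $hv_t - (-[(hD)^2+W]+\mu)v = e^{-\rho/h}v^3 \geq 0$, so $v \geq w$, i.e. $u \geq e^{-\rho/2h}w$.
\item \textbf{Lower bound for $w$.} Compare $w$ with the solution of the \emph{free} semiclassical heat equation with harmonic confinement: since on $\varphi_t(B(x_0,2a)) \subset V^{-1}[0,\mu-\alpha-\e]$ we have $W \leq \mu - \alpha - \e + \tfrac14|\nabla\rho|^2 + O(h)$, the drift term $ih\langle\nabla\rho,D\rangle$ (which reappears when we undo the conjugation, or equivalently the $\tfrac14|\nabla\rho|^2$ term plus the Gaussian) transports the bump along $\varphi_t$. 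A Mehler-type / Gaussian-ansatz computation shows $e^{-\rho/2h}w(x,t) \geq c\,\beta\,e^{t(\mu-\alpha-\e)/h}\,e^{-|x-\varphi_t(x_0)|^2/(ch)}$ on the relevant region, possibly after losing polynomial-in-$h$ factors — but the exponential gain $e^{t(\mu-\alpha)/h}$ dominates, since $\mu - \alpha > 0$.
\item \textbf{Choosing $t_1$.} Because $\alpha < \mu$, pick $t_1 < \delta$ with $t_1(\mu-\alpha-\e)/h$ large enough that $c\,\beta\,e^{t_1(\mu-\alpha-\e)/h} \geq 1$; this requires $t_1 \gtrsim Ch \cdot (1/h) = O(1)$, and by taking $C$ in $\beta = e^{-1/(Ch)}$ appropriately we can arrange $t_1 < \delta$ uniformly in $h$. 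On $\varphi_{t_1}(B(x_0,a))$ the Gaussian factor $e^{-|x-\varphi_{t_1}(x_0)|^2/(ch)}$ is bounded below (the ball $B(x_0,a)$ transports to a set within $O(a)$ of $\varphi_{t_1}(x_0)$), so $u(x,t_1) \geq 1$ there.
\end{enumerate}

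The main obstacle I expect is Step~3: getting a \emph{quantitative} lower bound on $w$ (equivalently on $u$) that simultaneously (a) tracks the transport by $\varphi_t$ correctly — the first-order term $ih\langle\nabla\rho,D\rangle$ is a genuine drift, not a small perturbation — and (b) captures the exponential growth rate $e^{t(\mu-\alpha)/h}$ coming from $\mu$ exceeding the potential on the transported region, while only losing powers of $h$ that $\beta = e^{-1/(Ch)}$ can absorb. The honest way to do this is to construct an explicit Gaussian subsolution $\underline u(x,t) = \beta(t)\,e^{-(x-\varphi_t(x_0))^\top Q(t)(x-\varphi_t(x_0))/h}$, plug it into \eqref{eqn:main} (dropping the favorable $u^3$ term), and choose the ODEs for the scalar amplitude $\beta(t)$ and the symmetric matrix $Q(t)$ so that the parabolic inequality $h\underline u_t \leq P\underline u + \underline u^3$ holds on the relevant space-time region; the center must follow $\varphi_t$, $Q(t)$ stays positive definite and comparable to the identity for $t \leq \delta$ (here the $C^2$ bounds on $\rho$ from \eqref{eqn:vfieldProp} enter), and $\dot\beta/\beta \geq (\mu - \alpha - \e - \text{tr-type terms})/h$ stays positive because $\mu > \alpha$. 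Then the maximum principle comparison $u \geq \underline u$, together with $\underline u(\varphi_{t_1}(x_0),t_1) \geq 1$ after the $O(1)$ time $t_1$, finishes the argument. Verifying the differential inequality and the positivity of $Q$ on $[0,\delta]$ is the computational heart; everything else (choice of $u_0$, the $C^p$ bound, invoking the maximum principle) is routine.
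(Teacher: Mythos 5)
Your proposal departs from the paper's argument in two substantive ways -- you conjugate by $e^{\rho/2h}$ and work with a semiclassically scaled Gaussian ansatz $\underline u = \beta(t)\, e^{-(x-\varphi_t(x_0))^\top Q(t)(x-\varphi_t(x_0))/h}$, whereas the paper never conjugates and instead uses $w(x,t) = e^{\alpha t/h} w_0(\varphi_t(x))$ with $w_0$ a fixed, $h$-\emph{independent} profile built from an extended Dirichlet ground state, compactly supported in $B(x_0,2a)$, and bounded below by $e^{-\delta/(2h)}$ on the full ball $B(x_0,a)$. That design is not cosmetic: it is what makes the last step work and what handles the boundary. Your argument has two genuine gaps at precisely these points.

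First, the conclusion in your Step 4 is false as stated. You claim that on $\varphi_{t_1}(B(x_0,a))$ the Gaussian factor $e^{-|x-\varphi_{t_1}(x_0)|^2/(ch)}$ is ``bounded below''; but $a$ is a fixed $O(1)$ radius, so at the edge of the ball this factor equals $e^{-a^2/(ch)}$, which tends to zero faster than any power of $h$. A Gaussian of width $\sqrt{h}$ simply cannot be uniformly bounded below on a set of unit size. To salvage this you would have to let the amplitude $\beta(t_1)$ be exponentially \emph{large}, of size at least $e^{a^2/(ch)}$, which in turn requires $t_1(\mu-\alpha-\e) > a^2/c + c_0$ together with $t_1<\delta$, i.e.\ $a^2/c + c_0 < \delta(\mu-\alpha-\e)$; whether the covariance $c$ can be taken large enough for this while keeping the Riccati inequality for $Q(t)$ (and positivity of $Q$) satisfied on $[0,\delta]$ is exactly the hard part, and it is not obviously possible for the general $V$, $\rho$, $a$, $\delta$ allowed by the lemma. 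The paper avoids this entirely by choosing a subsolution that is already of size $e^{-\delta/(2h)}$ uniformly across all of $B(x_0,a)$, so a single exponential growth factor $e^{\alpha t/h}$ lifts the whole ball to $\geq 1$ at once.

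Second, the Gaussian has full support. The subsolution inequality $h\underline u_t - P\underline u \leq \underline u^3$ must hold \emph{pointwise}, and for $|x|$ large the term $V(x) - \mu$ in $(h\underline u_t - P\underline u)/\underline u$ is large and positive (recall $V \gtrsim \langle x\rangle^k$), while the favorable $-4|Qy|^2$ term only grows quadratically with $Q$ small. So the inequality fails in the far field no matter how tiny $\underline u$ is there -- smallness of $\underline u$ does not help a pointwise sign condition. You therefore need a cutoff, at which point the comparison is no longer classical across the support boundary; the paper handles exactly this by taking $w_0$ compactly supported with smooth $\partial O$ and verifying that $w$ is a \emph{viscosity} subsolution across $\partial O_t$, which is the bulk of the technical work in their proof. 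There is also a smaller untreated term in your expansion: the drift $\nabla V(c(t))\cdot y$ does not cancel when the center follows $\varphi_t$ (only the $\nabla\rho$ drift does), so you would further need to complete a square and fold this into the Riccati inequality. None of these are necessarily fatal, but as written the proposal does not close.
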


\noindent\begin{proof}
Let $\upsilon$ solve 
\begin{equation}
\label{eqn:linProb}
(h\partial_t-P(x,hD))\upsilon=0, \upsilon(x,0)=\upsilon_0.
\end{equation}

Let $w_0:\re^d\to \re$ and define $O:=\{x:w_0>0\}$. We make the following assumptions on $w_0$,
\begin{equation}\label{eqn:wAssume1}
w_0\geq 0,\quad\|w_0\|_{C^p}\leq \exp(-\recip{Ch}),\quad w_0\in C(\re^d)
\end{equation}
\begin{equation}\label{eqn:wAssume2}
w_0\in \overline{C^\infty}(O),\quad \text{supp }w_0\subset B(x_0,2a),\quad
w_0>\exp\left(-\frac{\delta}{2h}\right)\text{ on }B(x_0,a),
\end{equation}
\begin{equation}\label{eqn:wAssume3}
\partial O\text{ is smooth},\quad -\Delta w_0 (x)\leq Cw_0(x)-\beta\text{ for }x\in O\text{ and }0<h<h_0.
\end{equation}
where $\overline{C^\infty}(O)$ are smoothly extendible functions on $O$.
We will construct such a function at the end of the proof.

Define $w:[0,2\delta)\times \re^d\to \re$ by 
 $$w:=\begin{cases}
	\exp\left(\frac\alpha h t\right)w_0(\varphi_t(x))&\text{ where }\varphi_t\text{ is defined}\\
0&\text{else}.\end{cases}$$
Since supp $w\subset B(0,2a)$ and $\varphi_t$ is defined on $B(0,2a)\times [0,2\delta)$, $w$ is continuous. We proceed by showing that $w$ is a viscosity subsolution of $(\ref{eqn:linProb})$ in the sense of Crandall, Ishi, and Lions \cite{Lions}. 

First, we show that $w$ is a subsolution on $O_t:=\varphi_t(O)$ for $t<\delta$. 
\begin{eqnarray}
hw_t-P(x,hD)w&=&hw_t-h^2\Delta w-ih\la \nabla \rho ,D\ra w-\mu w +V(x)w\nn 
&=&(\alpha -\mu +V(x))w-h^2\Delta w\nn 
&\leq & \exp\left(\frac \alpha ht\right)
\left((\alpha - \mu +V(x))w_0\right)-h^2\Delta w\nonumber
\end{eqnarray}
Now, by Taylor's formula, $\varphi_t(x)=x+O(t)$. Hence $-\Delta\left[w_0(\varphi_t(x))\right]=-\Delta w_0(x)+O(t)$.  We have $t<\delta$, and $-\Delta w_0\leq Cw_0-\beta$ on $O$. Therefore, for $\delta$ small enough, $-\Delta w\leq Cw_0 $. Hence, for $h$ small enough independent of $0<\delta <\delta_0$,
$$hw_t-P(x,hD)w\leq \exp\left(\frac{\alpha}{h}t\right)\left( \alpha-\mu+Ch^2+V(x)\right)w_0$$
Now, since for some $\e>0$ and $t<\delta$, $\text{supp } w\subset V^{-1}[0,\mu-\alpha -\e]$ we have that 
$$hw_t-P(x,hD)w\leq \exp\left(\frac{\alpha}{h}t\right)(Ch^2-\e)w\leq 0$$
for $h$ small enough. Thus, $w$ is a subsolution on $O_t$ for $t<\delta$. Next, we observe that on $(\re^d\setminus \overline{O_t})$, $w\equiv 0$ and hence is a subsolution of (\ref{eqn:linProb}).  

Finally, we consider $\partial O_t:= \varphi_t(\partial O)$. We have that $\partial O_t$ is smooth. If $y_0\in \partial O_t$ and $w$ is twice differentiable at $y_0$, then $w_t=(\Delta w)(y_0)=(D w)(y_0)=w(y_0)=0$ and $w$ is clearly a subsolution to \eqref{eqn:linProb} at $y_0$. Let $y_0\in \partial O_t$ be a point where $w$ is not twice differentiable. Suppose that $\phi\in C^2$ such that $w-\phi$ has a maximum at $y_0$. 

We take paths through $y_0$ to reduce to a one dimensional problem. For any path $\gamma:I\to \re\times \re^d$ with $\gamma(0)=(t,y_0)$, define $h_\gamma(s):=w(\gamma(s))$ and $\phi_\gamma(s):=\phi(\gamma(s))$. Since $w$ is nonnegative, continuous, smooth on $O_t$, and extends smoothly from $O_t$ to a function on $\re^{d}$ for all $t<\delta$, $h'_{\gamma+}:=\lim_{s\to 0^+}h_\gamma '(s)$ and $h'_{\gamma-}:=\lim_{s\to 0^-} h_\gamma '(s)$ exist. Simlarly, $h''_{\gamma+}$ and $h''_{\gamma-}$ exist. Therefore, since $w-\phi$ is maximized at $y_0$, $h'_{\gamma+}\leq \phi_\gamma '(0)\leq h'_{\gamma-}$.  Now, since $w$ is not twice differentiable at $y_0$, either there exists $\gamma$ such that $h_\gamma$ is not differentiable at $0$ or $w$ is differentiable at $y_0$, but not twice differentiable.
\begin{description}
\item[Case 1] $\gamma(s)$ is a path through $y_0$ for which $h_\gamma$ is not differentiable.\\
Then $h'_-<h'_+$ and there exists no such $\phi$. 
\item[Case 2] $w$ is differentiable at $x_0$ but not twice differentiable. \\
Then, for all $\gamma$ through $y_0$, $\varphi_\gamma '(0)=h'_\gamma(0)$ and $\varphi_\gamma ''(0)\geq \max(h''_{\gamma +},h''_{\gamma-})$. Now, let $\gamma_i$ be the coordinate paths through $x_0$ with $w(\gamma_i(t))>0$ for $0<t<\delta$. Then, since on $w>0$, $w$ is a subsolution of the linearized problem (\ref{eqn:linProb}), we have 
\begin{eqnarray} 
(h\partial_t\phi-P(x,hD)\phi)(x_0)&=&h\partial_t w-h^2\Delta \phi-ih\la \nabla \rho, D\ra w-\mu w+V(x)w\nn 
&\leq& h\partial_t w-h^2\left(\sum_i\lim_{t\to 0^{+}}h''_{\gamma_i+}\right)-ih\la \nabla \rho, D\ra w-\mu w+V(x)w\leq 0.\nonumber
\end{eqnarray}
\end{description} 
Thus, we have that $w$ is a subsolution on $\partial O_t$. Putting this together with the arguments above, we have that $w$ is a viscosity subsolution for (\ref{eqn:linProb}) on $t<\delta$. 

Now, by an adaptation of the maximum principle found in \cite[Section 3]{Lions} to parabolic equations, any solution, $\upsilon$ to (\ref{eqn:linProb}) with initial data $\upsilon_0>w_0$ will have $\upsilon\geq w$ for $t<\delta$. But, since $\upsilon\geq 0$, $\upsilon^3\geq 0$ and hence the solution $u$ to (\ref{eqn:main}) with initial data $\upsilon_0$ will have $u\geq \upsilon\geq w$ for $t<\delta$. Now, since for $t>\frac{\delta}{2}$, $w(x,t)\geq 1$ on $\varphi_t(B(x_0,a))$, we have the result.

We now construct a function $v_0$ satsifying the assumptions, (\ref{eqn:wAssume1}),\eqref{eqn:wAssume2}, and \eqref{eqn:wAssume3}. Let $v_1$ be the ground state solution of the Dirichlet Laplacian on $B(0,1)\subset \re^d$ i.e.
$$-\Delta v_1=\lambda v_1\text{ on }B(0,1)\quad v_1|_{\partial_{B(0,1)}}=0.$$
Then, $v_1$ extends smoothly off of $B(0,1)$ and has $v_1> 0$ inside $B(0,1)$ (see for instance \cite[Section 6.5]{E}).  

Let $\chi\in C^\infty(B(0,1))$, $0\leq \chi \leq 1$ with $\chi\equiv 1$ on $B(0,1)\setminus B(0,1-\e)$ and supp $\chi \subset B(0,1)\setminus B(0,1-2\e)$. Then, define $v_2:= Mv_1+\left[\chi(x)\right] (|x|^2-1)$ where $M$ is large enough so that $v_2> 0$. There exists such an $M$ since $v_1>0$ in $B(0,1)$ and $\lambda >0$ imply that $-\Delta v_1=\lambda v_1> 0$ and hence, by Hopf's Lemma, $\partial_\nu v_1<0$ on $\partial B(0,1)$ where $\nu$ is the outward normal vector to $\partial B(0,1)$ (see for instance \cite[Section 6.4]{E}).

For $|x|\leq 1-\e$, there exists $C>0$ such that,
\begin{equation}
-\Delta v_2=\lambda Mv_1-\left[\Delta \chi(x)\right](|x|^2-1)-4\la \partial \chi, x\ra -2\chi d\leq \lambda Mv_1+C.\nonumber
\end{equation}
For $|x|\leq 1-\e$, $v_1>\delta$. Thus, by increasing $M$ if necessary, we obtain $\beta>0$ such that
$$-\Delta v_2\leq \lambda M v_1+C\leq   2\lambda v_2-\beta.$$

Now, for $1-\e<|x|<1$, 
$$-\Delta v_2=\lambda Mv_1-2d=\lambda (v_2-|x|^2+1)-2d\leq \lambda v_2+\lambda (2\e -\e^2)-2d.$$
Thus, for $\e>0$ small enough, there exists $
\beta>0$ such that 
$$-\Delta v_2\leq \lambda v_2-\beta.$$

 Finally, $\exists$ $a\in \re$, $x_0\in \re^d$, and $C_1,C_2>0$ constants so that 
$$v_0=\begin{cases}
C_1e^{-\recip{C_2h}}v_2(a^{-1}(x-x_0))&x\in B(x_0,a)\\
0&\text{else}
\end{cases}$$ satisfies the conditions on $w_0$. 

\end{proof}

\noindent{\bf Remark 1.} If a shorter time is desired, one may use initial data of $O(h^n)$ to obtain the same result in time $O(h|\log h|)$.

\noindent{\bf Remark 2.} Notice that to obtain a growing subsolution it was critical that $\mu>0$. This corresponds precisely with the movement of the pseudospectrum of $P(x, hD)$ into the right half plane.

Now, we will demonstrate finite time blow-up using the fact that in time $O(1)$ the solution to (\ref{eqn:main}) is $\geq 1$ on an open region. The proof of theorem \ref{thm:main} follows

\begin{figure}[htbp]
\begin{center}
\includegraphics[width=6in]{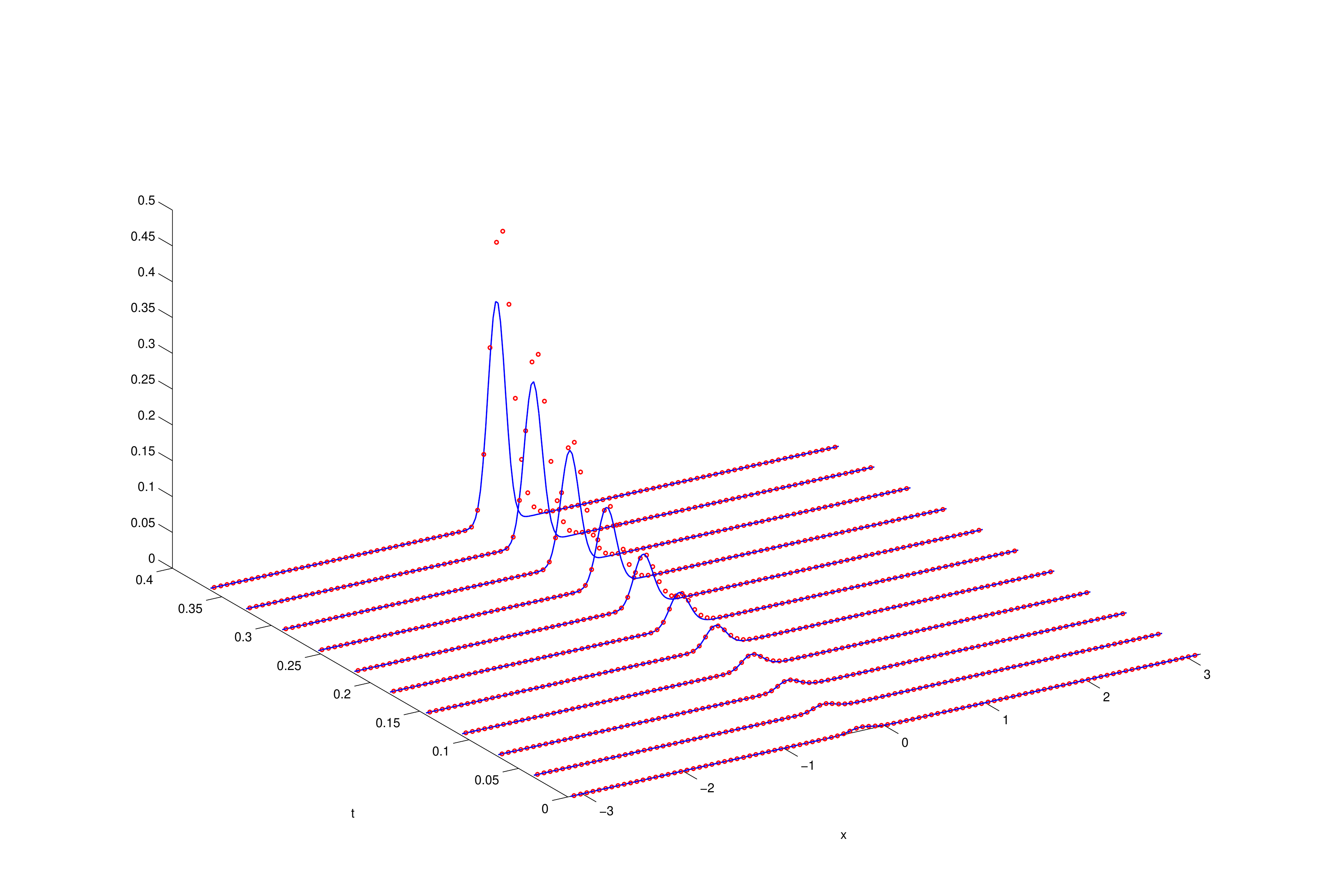}
\caption{\label{f:1}
We show simulations of solutions to the equation $hu_t=P_1(x,hD)u$ with $h=1/193$. The solution using a quasimode $u_0$ with eigenvalue $z=\recip{16}$ and error $O(h^3)$ as initial data is shown in the blue solid linse. The red dotted lines show $u(x,t)=u_0(x)\exp\left(zt/h\right)$. We see that the solution to the linearized problem \eqref{eqn:linProb} with quasimode initial data closesly approximates the exponential until $t\approx0.3$}
\end{center}
\end{figure}

\begin{proof}
Let $u_0(x)$ and $t_1$ be the initial data and time found in Lemma \ref{lem:O(1)ndim} with $(a, x_0,\delta)$ such that $\varphi_t$ is defined  on $B(x_0,a)$, $\varphi_{t}\left(B\left(x_0,a\right)\right)\subset V^{-1}[0,\frac{\mu}{2}]$ for $t\in [0,\delta]$, and $t_1<\delta$. Then, $u(x,t_1)\geq 1$ on $\varphi_t\left(B\left(x_0,a\right)\right)$. 

Now, let $\Phi\in C_0^\infty(\re)$ be a smooth bump function with $\Phi(y)=1$ on $|y|\leq 1$, $0\leq \Phi \leq 1$, supp $\Phi\subset (-2,2)$, and $\Phi ''\leq C\Phi^{1/3}$ (one such function is given by $e^{-1/x}$ for $\e>x>0$). Define $\chi:\re^d\to \re$ by $\chi(y):=\Phi\left(2a^{-1}|y|\right).$ 

Next, let $y'=\varphi_{t}(x_0+y)$ and let
$$v(y,t):=\chi(y)u(y',t).$$
Then, we have that 
$$hv_t=h^2\Delta v+\mu v+v^3-2h^2\la\nabla\chi ,\nabla u\ra -h^2\Delta \chi u+(\chi -\chi ^3)u^3-V(y')v.$$
Finally, define the operations, $[f]$ and $[ f,g]$ by
$$[f]:=\negint_{B(0,a)}f(y)dy\quad \text{ and }\quad [ f,g ]:=\negint_{B(0,a)}\la f(y),g(y)\ra dy.$$

Then, we have that 
\begin{eqnarray}
h[v]_t&=&h^2[\Delta v]+\mu [v]+[v^3]-h^2\left[\Delta \chi ,u\right] -2h^2\left[ \nabla\chi ,\nabla u\right] +\left[ \chi -\chi ^3,u^3\right] -\left[ V(y'),v\right] \nn 
&\geq &\mu [v]+[v^3]+h^2\left[ \Delta \chi ,u\right] +\left[ \chi -\chi ^3,u^3\right]-\left[ V(y'),v\right] \label{eqn:intPartsnd}
\end{eqnarray}
Here, (\ref{eqn:intPartsnd}) follows from integration by parts, the fact that $\nabla \chi=0$ at $|y|= a$ and that 
$$\negint_{B(0,a)} \Delta v=\frac{c_d}{a^d}\int_{\partial B(0,a)}\nabla v\cdot \nu=0.$$

\noindent We will later need that $[v^3]\geq [v]^3$. To see this use H\"{o}lder's inequality with $f=wc_d^{\recip{3}}(a)^{-\frac{d}{3}}$, $g=c_d^{\frac{2}{3}}(a)^{-\frac{2d}{3}}$, $p=3$, and $q=\frac{3}{2}$ to obtain
$$[v^3]=\int_{B(0,a)}\frac{v^3c_d}{a^d}d\xi\geq \left(\int_{B(0,a)}\frac{vc_d}{a^d}d\xi \right)^3=[v]^3.$$

Next, we will estimate $\left[ \Delta\chi , u\right]$.  Observe that 
\begin{eqnarray}\left[ \Delta\chi ,u\right] =\left|\negint\Delta \chi  u\right|&\leq &\left|\recip{C_d}\int_0^a\left(\frac{2a^{-1}(d-1)}{r}\Phi '(2a^{-1}r) +4a^{-2}\Phi ''(2a^{-1}r)\right) \int_{\partial B(0,r)} u(r,\phi) dSdr\right| \nn 
&\leq &C\int_0^a\left(\Phi^{1/2}+\Phi^{1/3}\right)\int_{\partial B(0,a)}u(r,\phi)dSdr\label{eqn:chigreat0}\\
&\leq& C\negint\chi ^{1/3}u\leq C\negint(1+\chi u^3)\leq C'+C\negint\chi u^3\label{eqn:cubicnd}
\end{eqnarray}
where $C'$ and $C$ do not depend on $h$. Here (\ref{eqn:chigreat0}) follows from the fact that for any function $\Phi\geq 0$, $\Phi '\leq \Phi^{1/2}$ and that $\Phi '=0$ near $r=0$. (\ref{eqn:cubicnd}) follows from $0\leq \Phi\leq 1$.

Now, we have 
\begin{eqnarray}
h[v]_t&=&\mu [v]+[v^3]+h^2\left[ \Delta \chi ,u\right] +\left[ \chi -\chi ^3,u^3\right]-\left[ V(y'),v\right]\nn 
&\geq &\mu [v]+[v^3]-O(h^2)+\left[ (1- O(h^2))\chi -\chi^3,u^3\right] -\left[ V(y'),v\right]\nn 
&\geq &\mu [v]+[v^3]-O(h^2)-O(h^2)[v^3] -\left[ V(y'),v\right]\label{eqn:chiCubeSubnd}\\ 
&\geq&\mu [v] +(1-O(h^2))[v]^3-O(h^2)-\left[ V(y'),v\right]\label{eqn:holder}
\end{eqnarray}
Here, (\ref{eqn:chiCubeSubnd}) follows from the fact that $\chi \leq 1$ and (\ref{eqn:holder}) follows for $h<1$ since  $[v^3]\geq [v]^3$.

Now, on $t< \delta$, we have $V(y')\leq \frac{\mu}{2}$. Thus, for $0<t<\delta$,
$$h[v]_t\geq \frac{\mu}{2}[v] +(1-O(h^2))[v]^3-O(h^2).$$
We have that $[v](t_1)\geq 1/4$ and $\mu>0$. Therefore there exists $\gamma>0$ such that, for $h$ small enough and $t_1\leq t\leq t_1+\gamma$,
$$h[v]_t\geq \frac{\mu}{4}[v]+\recip{2}[v]^3.$$ 
But, the solution to this equation with initial data $[v](0)\geq 1/4$ blows up in time $t_2=O(h)$. Hence, so long as $t_1+t_2<\min(\delta,t_1+\gamma)$ and $h$ is small enough, $[v]$ blows up in time $t_1+t_2$. Observe that since $t_1<\delta$, $0\leq t_1+t_2=t_1+O(h)<\min(\delta,t_1+\gamma)$ for $h$ small enough. Thus, the solution to \ref{eqn:main} blows up in time $O(1)$. 
\end{proof}
{\bf{Remark.}} A similar result holds for polynomially small data with blow up in time $O(h|\log h|)$. 

\section{Numerical Simulations}\label{sec:numerical}

We expect that the instability of (\ref{eqn:main}) is related to the presence of pseudospectrum in the right half plane. In fact, using numerical simulations for (\ref{eqn:linProb}) based on code from \cite{KassamTrefethen} with $P$ as in (\ref{eqn:L1d}), (see Figure \ref{f:4}) we are able to demonstrate that the the solution with a quasimode for a positive eigenvalue as initial data closely approximates an exponential. Based on these results we expect that a proof of blow-up using quasimodes will allow the results of Theorem \ref{thm:main} to be extended to complex energies and wider classes of operators. 

All simulations were run in 1 dimension with $\mu=\recip{8}$. Unless otherwise stated, all quasimodes are constructed with errors of $O(h^3)$.

%\newpage

%\bibliographystyle{plain}
%\bibliography{StandardBib}

\begin{thebibliography}{10}


\bibitem{CC}
C. Cossu and J. M. Chomaz, 
\newblock
Global measures of local convective instabilities.
\newblock
Phys. Rev. Lett. 78, 4387–4390 (1997)

\bibitem{Lions}
M. Crandall , G. Ishii, P.-L. Lions, 
\newblock 
User's Guide to Viscosity Solutions of Second Order Partial Differential Equations
\newblock 
Bulletin of the American Mathematical Society, 27:1--67, July 1992.

\bibitem{Davi98}
E.B. Davies
\newblock
Semi-classical states for non-self-adjoint schrodinger operators. 
\newblock 
Communications in Mathematical Physics, 200:35–41, 1999

\bibitem{De} N. Dencker, 
\newblock
The pseudospectrum of systems of semiclassical operators,
\newblock
Analysis \& PDE 1:323--373, 2008.

\bibitem{zworski04}
N. Dencker, J. Sj\"ostrand,  and M. Zworski, 
\newblock Pseudospectra of semi-classical (pseudo)differential
operators. 
\newblock
Comm. Pure Appl. Math, 57(3), 2004.

\bibitem{ET}
L.N.~Trefethen and M. Embree, 
\newblock 
{\em Spectra and Pseudospectra: The Behavior of Nonnormal Matrices and Operators},
\newblock
Princeton University Press, 2005.

\bibitem{E}
L.C. Evans, 
\newblock{\em Partial Differential Equations, 2nd edition.}
\newblock 
Graduate Studies in Mathematics, AMS, 2010. 

\bibitem{EZB}
L.C. Evans and M. Zworski.
\newblock \emph{Semiclassical Analysis,} 
\newblock
Graduate Studies in Mathematics, AMS, to appear. 
%{\tt http://math.berkeley.edu/$\sim$zworski/semiclassical.pdf}

\bibitem{KassamTrefethen}
A. Kassam and L. N. Trefethen,
\newblock \emph{Fourth-order time stepping for stiff pdes},
Siam J. Sci. Comput. 26 (2005), 1214-1233.

\bibitem{lee}
J. M. Lee, 
\newblock{\em Introduction to Smooth Manifolds},
Springer-Verlag, 2003.


\bibitem{KPS}
K. Pravda-Starov,
\newblock
Pseudo-spectrum for a class of semi-classical operators, 
\newblock
Bulletin de la Société Mathématique de France, 136:329--372, 2008.

\bibitem{Sand03}
B. Sandstede and A. Scheel, 
\newblock
Basin boundaries and bifurcations near convective instabilities: a case study. 
\newblock
J. Differential Equations, 208:176--193, 2005.


\bibitem{zworski01}
M. Zworski
\newblock
{ A remark on a paper by E.B. Davies.}
\newblock
Proc. A.M.S. 129: 2955--2957, 2001.

%\bibitem{Sjos}
%J. Sj\"ostrand, \newblock 
%\emph{Singularites analytiques microlocales}.
%\newblock 
%!Asterisque, 95(1982).
\end{thebibliography}

\end{document}